\makeatletter\@addtoreset{equation}{section} \makeatother
\newtheorem{theorem}{Theorem}[section]
\newtheorem{Lemma}{Lemma}[section]
\newtheorem{remark}{Remark}[section]
\makeatletter \setlength{\parindent}{2em}
\title{ Lifespan of Classical Solutions to Quasilinear Wave Equations Outside of a Star-Shaped Obstacle in Four Space Dimensions }
\date{May~5, 2014}
\author{ Dongbing Zha\thanks{Corresponding author. School of Mathematical Sciences, Fudan University, Shanghai 200433, PR China.
{ E-mail address: ZhaDongbing@fudan.edu.cn(D.Zha), yizhou@fudan.edu.cn(Y.Zhou)}.}~,~~Yi Zhou}
\begin{document}

\maketitle
\begin{abstract}
 We study the initial-boundary value problem of
quasilinear wave equations outside of a star-shaped obstacle in four
space dimensions, in which the nonlinear term under consideration may explicitly depend on the unknown function itself.
By some new $L^{\infty}_{t}L^{2}_{x}$ and  weighted $L^{2}_{t,x}$ estimates for the unknown function itself, together with energy estimates and KSS estimates, for the quasilinear obstacle problem we obtain a lower bound of the lifespan $T_{\varepsilon}\geq \exp{(\frac{c}{\varepsilon^2})}$, which coincides with the sharp lower bound of lifespan estimate for the corresponding Cauchy problem.

\bigskip
{\bf Key words} Quasilinear wave equations; Star-shaped obstacles; Lifespan.
\end{abstract}
\quad~\qquad{ {\bf  2010 MR Subject Classification }  35L05; 35L10; 35L20; 35L70.}

\pagestyle{plain} \pagenumbering{arabic}

\section{\bf Introduction and Main Result }\quad

This paper is devoted to study the lifespan of classical
solutions to the following initial-boundary value problem for nonlinear wave equations:
\begin{equation}\label{Quasilinear}
\left \{
\begin{array}{llll}
\Box u(t,x)=F(u,\partial u,\partial\nabla u), ~ (t,x)\in  \mathbb{R}^{+}\times \mathbb{R}^4\backslash  \mathcal {K} , \\
u|_{\partial \mathcal {K}}=0,\\
t=0:u=\varepsilon f, u_t=\varepsilon g,~x\in \mathbb{R}^4\backslash
\mathcal {K},\\

\end{array} \right.
\end{equation}
where
$\Box =\partial_{t}^2-\Delta$ is the wave operator, $\varepsilon>0$ is a small parameter, the obstacle $\mathcal
{K}\subset \mathbb{R}^4$ is compact, smooth and strictly star-shaped with respect to the origin, and $f,g$ in \eqref{Quasilinear} belongs to $C_c^{\infty}(\mathbb{R}^4\backslash \mathcal {K}).$
Moreover $(t,x)=(x_0,x_1,x_{2},x_{3},x_{4}), \partial_{\alpha}=\frac{\partial}{\partial x_{\alpha}}(\alpha=0,\cdots,4), \nabla=(\partial_{1},\partial_{2},\partial_{3},\partial_{4}),\partial=(\partial_{0},\nabla)$.
Let
\begin{equation}
\widehat{\lambda}=(\lambda; (\lambda_i),i=0,\cdots,4;
(\lambda_{ij}),i,j=0,\cdots,4, i+j\geq 1).
\end{equation}
Suppose that in a neighborhood of $\widehat{\lambda}=0$, say, for $|\widehat{\lambda}|\leq 1$, the nonlinear
term  $F=F(\widehat{\lambda})$ is a smooth
function satisfying
\begin{equation}
F(\widehat{\lambda})=\mathcal {O}(|\widehat{\lambda}|^2)
\end{equation}
and being affine with respect to $\lambda_{ij}(i,j=0,\cdots,4, i+j\geq 1)$ .\par
 Our aim is to study the lifespan of classical solutions to \eqref{Quasilinear}. By definition,
  the lifespan  $T_{\varepsilon}$ is the supremum of all $T>0$ such that
there exists a classical solution to \eqref{Quasilinear}  on $0\leq t\leq T,$ i.e.
\begin{align}
T_\varepsilon \stackrel{\mathrm{def}}{=} \sup \{T>0: \eqref{Quasilinear} {\text{ has a unique classical solution on}~[0,T] }\}.
\end{align}
\par
First, it is needed to illustrate why we consider the case of spatial dimension $n=4$. For this purpose, we have to review the history on the corresponding Cauchy problem in four space dimensions. In \cite{Hormander},  H\"{o}rmander considered the following Cauchy problem of nonlinear wave equations in four space dimensions:
\begin{align}\label{Cauchy}
 \begin{cases}
 \Box u(t,x)=F(u,\partial u,\partial\nabla u),~t\geq 0,~x\in \mathbb{R}^4,\\
 t=0:~u=\varepsilon f,~\partial_{t}u=\varepsilon g.
 \end{cases}
 \end{align}
Here in a neighborhood of $\widehat{\lambda}=0$, the nonlinear term $F$ is a smooth function with quadratic order with respect to its arguments. $f,g\in C_c^{\infty}(\mathbb{R}^4)$, and $\varepsilon>0$ is a small parameter. He proved that if $\partial_u^2F(0,0,0)=0$, then
\eqref{Cauchy} admits a unique global classical solution. For general $F$, he got a lower bound of the lifespan $T_{\varepsilon}\geq
  \exp({\frac{c}{\varepsilon}}), $ where $c$ is a positive constant independent of $\varepsilon$. But this result is not sharp. In \cite{Li 1}, Li and Zhou showed that H\"{o}rmander's estimate can be improved by
  \begin{align}
  T_{\varepsilon}\geq \exp{(\frac{c}{\varepsilon^2})}.
  \end{align}
  Li and Zhou's proof was simplified by Lindblad and Sogge in
\cite{Lindblad} later. Recently, the sharpness of Li and Zhou's estimate was shown by Takamura and  Wakasa in \cite{Takamura}
(see also Zhou and Han \cite{Zhou}). They proved that for the following Cauchy problem of semilinear wave equations:
\begin{align}\label{Cauchy1}
 \begin{cases}
 \Box u(t,x)=u^2,~t\geq 0,~x\in \mathbb{R}^4,\\
 t=0:~u=\varepsilon f,~\partial_{t}u=\varepsilon g,
 \end{cases}
 \end{align}
  the lifespan of classical solutions admits a upper bound: $T_{\varepsilon}\leq \exp{(\frac{c}{\varepsilon^2})}$ for some special functions $f,g\in C_c^{\infty}(\mathbb{R}^4)$. In fact, when the spatial dimension $n=4$, the equation in \eqref{Cauchy1} is just corresponding to the critical case of the Strauss conjecture, so it is the most difficult case to be handled. For the Strauss conjecture, we refer the reader to Strauss \cite{Strauss1} and the survey article Wang and Yu \cite{Wang}.
\par
The pioneering works by F. John and S. Klainerman open the field of lifespan estimate of classical solutions to the Cauchy problem of nonlinear wave equations. In other spatial dimensions, classical references can be found in John \cite{John0, John1, John}, John and~Klainerman \cite{Fr2}, Klainerman \cite{Klainerman1, Klainerman2, Klainerman, Klainerman4, Klainerman5,  Klainerman6}, Christodoulou \cite{Chris1}, H\"{o}rmander \cite{Hormander2, Hormander1}, Lindblad~\cite{Lindblad0}, Li and Chen~\cite{Li 2}, Li and Zhou~\cite{LiZhou2, LiZhou3, LiZhou4}, Alinhac \cite{Alinhac2, Alinhac3, Alinhac1} etc. . Especially, Klainerman's commutative vector field method in \cite{Klainerman} offer the basic framework for treating this kind of problem.

\par
For wave equations, a natural extension of the Cauchy problem is the initial-boundary value problem in exterior domains, which describes the wave propagation outside a bounded obstacle. Similarly to the Cauchy problem, the wave in exterior domains will propagate to the infinity(if the shape of the obstacle is sufficiently regular, say, for a star-shaped obstacle), But one should note the effect of the boundary condition, which will enhance the difficulty of the corresponding problem. In analogy with the Cauchy problem, we also want to get the lifespan estimate for classical solutions to the initial-boundary value problem in exterior domains.
\par
For the problem \eqref{Quasilinear} with obstacle
in four space dimensions , Du et al. \cite{Du1} established
a lower bound of the lifespan $T_{\varepsilon}\geq
  \exp({\frac{c}{\varepsilon}}). $  In \cite{Metcalfe2}, under the assumption
$\partial_{u}^2F(0,0,0)=0$, Metcalfe and Sogge  proved that \eqref{Quasilinear} has a unique global classical solution. Their results extend H\"{o}rmander's results from the Cauchy problem to the obstacle problem.
\par
In this paper, for the obstacle problem \eqref{Quasilinear} with general nonlinear term $F$, we get the lower bound of the lifespan $T_{\varepsilon}\geq \exp{(\frac{c}{\varepsilon^2})}$, which extends the result of Li and Zhou \cite{Li 1}  from the Cauchy problem to the obstacle problem. But the sharpness of this estimate is yet to be proved.
\par

In other dimensions, for the obstacle problem, when $n=3$, Du and Zhou \cite{Du2} showed that the analog of \eqref{Quasilinear} admits a unique classical solution with lifespan $T_{\varepsilon}\geq \frac{c}{\varepsilon^2}$(see also~\cite{He2}). For the special case where the nonlinear term
 does not explicitly depend on $u$, we refer the reader to ~\cite{KSS, KSS2, Metcalfe1} and the references therein. When $n\geq5$, Metcalfe and Sogge \cite{Metcalfe2} showed that the analog of \eqref{Quasilinear} has a unique global classical solution(see also Du et al. \cite{Du1}). The case of $n=2$ is still open.\par

 To prove our result, we will use Klainerman's commutative vector field method in \cite{Klainerman}. For the Cauchy problem, the Lorentz invariance of the wave operator is the key point of this method. However, for the obstacle problem, the Lorentz invariance does not hold. Another difficulty we encounter in the obstacle case is that, the homogeneous Dirichlet boundary condition is not preserved when some generalized derivatives act on the solution. To overcome these difficulties, Keel et al. \cite{KSS} established some weighted space-time estimates(KSS estiamtes) for first order derivatives of solutions to the linear wave equation. In \cite{Metcalfe1}, only by energy methods,  Metcalfe and Sogge established KSS estimates for perturbed linear wave equations on an exterior domain. By elliptic regularity estimates, high order KSS estimates(involve only general derivatives and spatial rotation operators) have been also established.  Using these estimates, they proved the long time existence for $n\geq 3$ when the nonlinear term does not depend explicitly on the unknown function. In this paper, we will prove our result by means of the framework of Metcalfe and Sogge \cite{Metcalfe1}. \par

  To handle the case that the nonlinear term depends explicitly on the unknown function, we will first prove a new $L^{\infty}_{t}L^{2}_{x}$ estimate for solutions to the Cauchy problem of linear wave equation in four space dimensions, based on the Morawetz estimate in \cite{Hidano2}.
  After that, starting from the $L^{\infty}_{t}L^{2}_{x}$ estimate and using the original method used to establish the KSS estimates in \cite{KSS}, combined with some pointwise estimates of the fundamental solution of wave operator in four space dimensions, we give a new weighted space-time $L^{2}_{t,x}$ estimate for the unknown function itself. The key point of the two estimates is that, on the right-hand side of these inequalities, we must take the $L^2$ norm with respect to the time variable. By the cut-off argument, we can extend these estimates to the case of obstacle problem of linear wave equation in four space dimensions. As to the estimate for the derivatives of solutions, we can use the energy estimate and KSS estimate in
  \cite{Metcalfe1}.
  \par

   Since we consider the problem with small initial data, the higher order terms have no essential influence on the
discussion of the lifespan of solutions with small amplitude,
without loss of generality, we assume that the nonlinear term $F$ can be
taken as
\begin{equation}\label{Nonlinearity}
F(u,\partial u,\partial\nabla u)= H(u,\partial u)+\sum^{4}_{\substack{\alpha,\beta=0\\ \alpha+\beta\geq 1}}\gamma^{\alpha \beta}(u,\partial u)\partial_{\alpha}\partial_{\beta}u,
\end{equation}
where~$H(u,\partial u)$  is a quadratic form, $\gamma^{\alpha\beta}$ is a linear form of $(u,\partial u)$ and satisfies the symmetry condition:
\begin{equation}\label{Coefficient}
\gamma^{\alpha\beta}(u,\partial u)=\gamma^{\beta\alpha}( u, \partial u), ~\alpha, \beta=0,1,\cdots,4, \alpha+\beta\geq 1.
\end{equation}\par
Without loss of generality, we may assume that the obstacle satisfying
\begin{align}\label{Obstacle}
\mathcal {K}\subset \mathbb{B}_{\frac{1}{2}}=\{x\in
\mathbb{R}^4:|x|< \frac{1}{2}\}.
\end{align}
\par
To solve \eqref{Quasilinear}, the data must be assumed to satisfy the relevant compatibility
conditions. Setting $J_k
u=\{\partial_{x}^a u: 0\leq |a|\leq k\},$
 we know that for a fixed $m$ and a
formal $H^m$  solution $u$ of \eqref{Quasilinear}, we can write $\partial_{t}^{k} u(0,\cdot)=\psi_k (J_k f,J_{k-1} g),0\leq
k\leq m, $
in which the compatibility functions $\psi_k(0\leq k\leq m)$ depend on the nonlinearity, $J_k f$ and $J_{k-1} g$. For
$(f,g)\in H^m \times H^{m-1}$, the compatibility condition simply requires that $\psi_k$ vanish
on $\partial \mathcal {K}$ for $0\leq k \leq m-1$. For smooth $(f , g)$, we say that the compatibility condition
is satisfied to infinite order if this vanishing condition holds for all $m$. For some further descriptions, see Keel et al. \cite{KSS22}.

The main theorem of this paper is the following
\begin{theorem}\label{Main12}
For the quasilinear initial-boundary problem \eqref{Quasilinear}, where the
obstacle $\mathcal {K}\subset \mathbb{R}^4$
 is compact, smooth and strictly star-shaped
with respect to the origin, and satisfies \eqref{Obstacle}, assume that initial data  $~f,g\in
C_c^{\infty}(\mathbb{R}^4\backslash \mathcal
{K})$, satisfies the compatibility conditions to infinite order,  and $F$ in \eqref{Quasilinear}
satisfies hypotheses (\ref{Nonlinearity}) and (\ref{Coefficient}).
Then for any given parameter $\varepsilon$ small enough,
\eqref{Quasilinear} admits a unique solution $u \in
C^{\infty}([0,T_\varepsilon)\times \mathbb{R}^4)$ with
\begin{equation}\label{sharpn}
T_{\varepsilon}\geq
  \exp({\frac{c}{\varepsilon^2}}),
\end{equation}
where $c$ is a positive constant independent of $\varepsilon$.
\end{theorem}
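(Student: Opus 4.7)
The plan is a bootstrap (continuity) argument built on Klainerman's commutative vector field method, adapted to the exterior problem in the spirit of Keel-Smith-Sogge and Metcalfe-Sogge. Fix the admissible generators $Z=(\partial_0,\nabla,\Omega_{ij})$, which commute with $\Box$ and preserve the Dirichlet condition on $\partial\mathcal{K}$; the scaling and the Lorentz boosts are excluded because they do not preserve the boundary condition. For a sufficiently large integer $N$, and for a smooth solution $u$ on $[0,T]$, I introduce the master quantity
\[
M_N(T) = \sup_{0\le t\le T}\|\partial Z^{\le N}u(t)\|_{L^2} + \mathrm{KSS}_N(T) + \mathcal{U}_N(T),
\]
where $\mathrm{KSS}_N(T)$ is the logarithmically weighted Keel-Smith-Sogge norm of $\partial Z^{\le N}u$ used in \cite{Metcalfe1}, and $\mathcal{U}_N(T)$ bundles the two new ingredients announced in the introduction: the $L^\infty_tL^2_x$ estimate for $Z^{\le N}u$ derived from the Morawetz-type bound of \cite{Hidano2}, and the weighted $L^2_{t,x}$ estimate for $Z^{\le N}u$ obtained by combining the previous one with pointwise bounds for the four-dimensional fundamental solution, both extended from $\mathbb{R}^4$ to $\mathbb{R}^4\setminus\mathcal{K}$ by a boundary cut-off.

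By the compatibility-to-infinite-order hypothesis and the standard local theory in exterior domains, a smooth solution exists for short time with $M_N(0)\le C_0\varepsilon$. Assume the bootstrap hypothesis $M_N(T)\le A\varepsilon$ for an absolute constant $A$ to be chosen. I would then apply the exterior-domain Klainerman-Sobolev inequalities (with the missing scaling and boosts compensated by elliptic regularity and spatial rotations, as in Metcalfe-Sogge) to translate $M_N(T)$ into pointwise control on low-order derivatives of $u$. The KSS part delivers decay for $\partial u$, while the $\mathcal{U}_N$ part delivers pointwise and $L^2$ control of $u$ itself that is strictly stronger than what the vector field method alone gives in the absence of the forbidden generators.

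The core of the argument is to apply $Z^{a}$, $|a|\le N$, to \eqref{Quasilinear} and to estimate the result in each of the three norms composing $M_N$. The quasilinear top-order contribution $\gamma^{\alpha\beta}(u,\partial u)\partial_\alpha\partial_\beta Z^{a}u$ is absorbed by a modified energy built from the symmetry condition \eqref{Coefficient}, paying a small factor $\|\partial\gamma\|_{L^\infty}=O(A\varepsilon)$ times decay; the commutator $[Z^{a},\gamma^{\alpha\beta}\partial_\alpha\partial_\beta]u$ is routine lower-order. The semilinear piece $H(u,\partial u)$ and the commutator errors are treated by $L^\infty\times L^2$: the $L^\infty$ factor produces a power of $A\varepsilon$ from the bounds above, and the $L^2$ factor is reabsorbed into $M_N$. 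The contributions that genuinely involve $u$ rather than $\partial u$ are the obstruction flagged in the introduction; these are handled with $\mathcal{U}_N$, whose crucial feature is that its forcing on the right-hand side is taken in $L^2$ in time, permitting a Cauchy-Schwarz step that extracts one extra factor of $\varepsilon$ from a weighted $\|u(t)\|_{L^\infty}$-type bound. Summing these estimates leads to an integral inequality of the schematic form
\[
M_N(T)^2 \le C\varepsilon^2 + C\varepsilon^2\int_0^T \frac{M_N(\tau)^2}{1+\tau}\,d\tau,
\]
and Gronwall's lemma yields $M_N(T)^2\le C\varepsilon^2(1+T)^{C\varepsilon^2}$. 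The bootstrap then closes as long as $(1+T)^{C\varepsilon^2}\le A^2/(2C)$, i.e.\ for $T\le\exp(c/\varepsilon^2)$.

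The main obstacle is precisely handling the $u$-dependent part of $F$. In four space dimensions the pointwise decay of $u$ itself available from the admissible vector fields alone is too weak (in fact not integrable) for the Metcalfe-Sogge derivative machinery to cover these terms, so the entire bootstrap would collapse without the new ingredients in $\mathcal{U}_N$. The technical heart is therefore (i) proving the $L^\infty_tL^2_x$ estimate for $u$ on the whole space from the Morawetz estimate of \cite{Hidano2}, (ii) upgrading it to a weighted $L^2_{t,x}$ estimate via the four-dimensional fundamental solution, and (iii) transferring both to $\mathbb{R}^4\setminus\mathcal{K}$ through a boundary cut-off that preserves the $L^2$-in-time structure on the right-hand side, without which the extra Cauchy-Schwarz that produces $\varepsilon^2$ rather than $\varepsilon$ inside the exponential is lost.
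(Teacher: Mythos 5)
Your proposal captures the correct architecture: a bootstrap in the admissible vector fields $Z=(\partial,\Omega)$ (no scaling, no boosts); new $L^\infty_tL^2_x$ and weighted $L^2_{t,x}$ estimates for $u$ itself, built from the Morawetz bound of Hidano et al.\ and from pointwise bounds on the four-dimensional fundamental solution; transfer to the exterior domain by a cut-off; and the observation that the $L^2$-in-time structure of the forcing norm is what ultimately saves an extra power of $\varepsilon$. All of this matches the paper. However, the closing step you propose does not match, and as written it contains a gap. You posit the Gronwall-type inequality
\[
M_N(T)^2 \le C\varepsilon^2 + C\varepsilon^2\int_0^T \frac{M_N(\tau)^2}{1+\tau}\,d\tau,
\]
but the $1/(1+\tau)$ time weight does not come from anywhere in the available toolkit: with only $\partial$ and $\Omega$ at your disposal (no boosts, no scaling), you do not have integrable pointwise time decay of $u$ or $\partial u$, and the new $L^\infty_tL^2_x$ bound gives at best $\|u(\tau)\|_{L^2}\lesssim\varepsilon$ with no decay in $\tau$. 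Squaring and integrating that gives $\varepsilon^2 T$, not $\varepsilon^2\log(1+T)$, and the bootstrap would break at linear, not exponential, time.

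What the paper does instead is build the logarithm into the master norm: $M(T)$ contains terms of the form $(\log(2+T))^{-1/2}\|\langle x\rangle^{-1/2}Z^{\le N}u\|_{L^2_{t,x}(S_T)}$, and the weighted $L^2_{t,x}$ estimate (the ``KSS estimate for $u$ itself'', Lemmas 3.4 and 4.1) supplies precisely that $(\log(2+T))^{-1/2}$ prefactor. In the nonlinear estimates, the worst term pairs $\langle x\rangle^{-1/2}Z^{\le N}u$ in $L^2_{t,x}$ against another factor in $L^\infty_t L^2_x$, and plugging in the bootstrap assumption gives a contribution $\lesssim \log(2+T)\,A^4\varepsilon^4$ directly; the remaining terms give $A^3\varepsilon^3$ or $A^4\varepsilon^4$ with no log. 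One then closes algebraically: $M(T)\le C_1\varepsilon + C(\log(2+T))^{1/2}A^2\varepsilon^2 + CA^{3/2}\varepsilon^{3/2} + CA^2\varepsilon^2$, which stays below $2A\varepsilon$ as long as $\log(2+T)\lesssim\varepsilon^{-2}$. No Gronwall step is needed, and none of the estimates used would even produce the integral structure you wrote down. If you want to keep your framework, you should replace the Gronwall inequality with this direct closing argument, and make sure your master quantity $\mathcal{U}_N(T)$ carries the $(\log(2+T))^{-1/2}$ weight explicitly so that the bootstrap hypothesis and the output of the $L^2_{t,x}$ estimate are commensurate.

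A second, smaller point: the $L^\infty$ factors in the nonlinear estimates are controlled not by Klainerman--Sobolev-type pointwise decay in $t$, which is unavailable here, but by a purely spatial decay estimate $\langle r\rangle^{3/2}|f(x)|\lesssim\sum_{|a|\le 3}\|Z^a f\|_{L^2(\mathbb{R}^4\setminus\mathcal{K})}$, uniform in $t$. This is consistent with the rest of your plan and is the key substitute for the missing decay; just be careful not to invoke time decay that the reduced vector field algebra cannot give you.
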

An outline of  this paper is as follows. In Section 2, we give some
notations. In Section 3, some $L^{\infty}_{t}L^{2}_{x}$ and weighted $L^2_{t,x}$ estimates for the wave equation in Minkowski space-time $\mathbb{R}^{1+4}$
  will be
established. In Section 4, we will give some estimates needed for obstacle problem. And then, the proof of Theorem \ref{Main12}
 will be presented in Section 5.

\section{ Some Notations}
Denote the spatial rotations
\begin{equation}
\Omega=(\Omega_{ij};1\leq i<j\leq 4),
\end{equation}
where
\begin{equation}
\Omega_{ij}=x_i\partial_j -x_j\partial_i , 1\leq i<j\leq 4,
\end{equation}
and the vector fields
\begin{equation}
Z=(\partial,\Omega)=(Z_1,Z_2,\cdots,Z_{11}).
\end{equation}
For any given multi-index~ $\varsigma=(\varsigma_1,\cdots,\varsigma_{11}),$ we denote
\begin{equation}
Z^{\varsigma}=Z_1^{\varsigma_1}\cdots Z_{11}^{\varsigma_{11}}.
\end{equation}

\par
As introduced in Li and Chen~\cite{Li 2}, we say that~$f\in
 L^{p,q}(\mathbb{R}^4),$ if
 \begin{align}
 f(r\omega)r^{\frac{3}{p}}\in L^p_r(0,\infty;
 L^q_w(S^3)),
 \end{align}
  where~$r=|x|, \omega=(\omega_1,\cdots,\omega_4)\in
 S^3,$ $S^3$ being the unit sphere in $\mathbb{R}^4$. For $1\leq p,q\leq +\infty,
 $ equipped with the norm
\begin{equation}
||f||_{L^{p,q}(\mathbb{R}^4)}\stackrel{\mathrm{def}}{=}||f(r\omega)r^{\frac{3}{p}}||_{L^p_r(0,\infty;
 L^q_w(S^3))},
\end{equation}
$L^{p,q}(\mathbb{R}^4)$ is a Banach space. It is easy to see that, if $p=q$, then $L^{p,q}(\mathbb{R}^4)$
 becomes the usual Lebesgue space $L^{p}(\mathbb{R}^4)$.

 \section{$L^{\infty}_{t}L^{2}_{x}$ and Weighted $L^2_{t,x}$ Estimates in Minkowski Space-time $\mathbb{R}^{1+4}$}
 \subsection{$L^{\infty}_{t}L^{2}_{x}$ Estimate}
 We first give a weighted Sobolev inequality which will be used in the proof of the
 ~$L^{\infty}_{t}L_x^2$ estimate of solutions.
\begin{Lemma}
If~$\frac{1}{2}< s_0< 2,$  then we have the estimate
\begin{equation}\label{Weighted Sobolev Estimates 0}
|||x|^{2-s_0} f||_{L^{\infty,2}(\mathbb{R}^4)}\leq C
||f||_{{\dot{H}}^{s_0}(\mathbb{R}^4)}
\end{equation}
and the corresponding dual estimate
\begin{equation}\label{Weighted Sobolev Estimates 1}
|| f||_{{\dot{H}}^{-s_0}(\mathbb{R}^4)}\leq C
|||x|^{s_0-2}f||_{L^{1,2}(\mathbb{R}^4)},
\end{equation}
where $C$ is a positive constant independent of $f$.
\end{Lemma}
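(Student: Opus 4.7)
The plan is to reduce the first estimate to a homogeneous trace inequality on $S^{3}$ via scaling, and then derive the dual estimate by a straightforward Cauchy--Schwarz/duality argument.

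For the first estimate, unwinding the definition (noting that $3/\infty=0$) the assertion reads
$$\sup_{r>0}\, r^{\,2-s_0}\,\|f(r\,\cdot\,)\|_{L^{2}(S^{3})} \leq C\,\|f\|_{\dot H^{s_0}(\mathbb{R}^{4})}.$$
Under the rescaling $f_{\lambda}(x):=f(\lambda x)$ one checks $\|f_{\lambda}\|_{\dot H^{s_0}}=\lambda^{s_0-2}\|f\|_{\dot H^{s_0}}$, and the substitution $\rho=\lambda r$ shows that the left-hand side of the above display transforms in exactly the same way. Applying the inequality at the single scale $r=1$ to $f_{r}$ and then taking the supremum in $r$ therefore recovers the general statement, so it suffices to prove
$$\|f\|_{L^{2}(S^{3})} \leq C\,\|f\|_{\dot H^{s_0}(\mathbb{R}^{4})},\qquad \tfrac12<s_0<2.$$
I would establish this homogeneous trace inequality by localization: let $\chi\in C_{c}^{\infty}(\mathbb{R}^{4})$ equal $1$ on a neighborhood of $S^{3}$ and supported in $\{1/2\leq|x|\leq 2\}$, then apply the classical trace theorem $H^{s_0}(\mathbb{R}^{4})\hookrightarrow H^{s_0-1/2}(S^{3})\hookrightarrow L^{2}(S^{3})$ (valid for $s_0>1/2$) to $\chi f$ to get
$$\|f\|_{L^{2}(S^{3})} = \|\chi f\|_{L^{2}(S^{3})} \leq C\bigl(\|\chi f\|_{L^{2}(\mathbb{R}^{4})}+\|\chi f\|_{\dot H^{s_0}(\mathbb{R}^{4})}\bigr).$$
The low-frequency term is absorbed by Hölder on the bounded support of $\chi$ combined with the Sobolev embedding $\dot H^{s_0}(\mathbb{R}^{4})\hookrightarrow L^{4/(2-s_0)}(\mathbb{R}^{4})$, whose validity requires exactly $s_0<2$; the $\dot H^{s_0}$ term is bounded by $C\|f\|_{\dot H^{s_0}}$ up to lower-order commutator contributions that are themselves controlled by the same Sobolev embedding.

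For the dual inequality, use $\|f\|_{\dot H^{-s_0}}=\sup_{\|g\|_{\dot H^{s_0}}\leq 1}|\langle f,g\rangle|$. Splitting the integrand as $f\bar g=(|x|^{s_0-2}f)(|x|^{2-s_0}\bar g)$ and writing the volume element as $r^{3}\,dr\,d\omega$, Cauchy--Schwarz on $S^{3}$ followed by Hölder in $r$ gives
$$|\langle f,g\rangle| \leq \int_{0}^{\infty} r^{3}\,\|(|x|^{s_0-2}f)(r\,\cdot\,)\|_{L^{2}(S^{3})}\,\|(|x|^{2-s_0}g)(r\,\cdot\,)\|_{L^{2}(S^{3})}\,dr \leq \||x|^{s_0-2}f\|_{L^{1,2}}\,\||x|^{2-s_0}g\|_{L^{\infty,2}}.$$
Bounding the last factor by $C\|g\|_{\dot H^{s_0}}$ using the estimate just proved, and taking the supremum over admissible $g$, yields \eqref{Weighted Sobolev Estimates 1}.

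The main obstacle is the homogeneous trace step in the first part, since the standard trace theorem produces an inhomogeneous Sobolev norm whose $L^{2}$ component is not directly controlled by $\|f\|_{\dot H^{s_0}}$. The cutoff plus Sobolev embedding argument above handles this cleanly, and is the place where both endpoints of the hypothesis $1/2<s_0<2$ enter (the lower bound for trace, the upper bound for Sobolev). The scaling reduction and the duality steps are essentially algebraic.
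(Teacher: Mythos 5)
Your proof is correct, but it takes a genuinely different route from what the paper points to. The paper only cites Li--Zhou (Theorem 2.10) and Wang--Yu, describing their argument as dyadic (Littlewood--Paley) decomposition in frequency combined with a one-dimensional Sobolev embedding in the radial variable plus localization. You instead observe that both sides of \eqref{Weighted Sobolev Estimates 0} scale identically under $f\mapsto f(\lambda\cdot)$, reduce to a single radius, and then invoke the classical trace theorem $H^{s_0}(\mathbb{R}^4)\to L^2(S^3)$ ($s_0>1/2$) together with the Sobolev embedding $\dot H^{s_0}\hookrightarrow L^{4/(2-s_0)}$ ($s_0<2$) to upgrade to the homogeneous estimate. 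This is cleaner and more conceptual, at the cost of importing the trace theorem and the multiplier bound $\|\chi f\|_{\dot H^{s_0}}\lesssim\|f\|_{\dot H^{s_0}}$ as black boxes, whereas the cited argument builds the estimate from scratch via dyadic pieces. The duality step, via Cauchy--Schwarz on $S^3$ followed by $L^1_r\times L^\infty_r$ H\"older with the $r^3$ weight carried by the $L^{1,2}$ factor, is standard and agrees with what one would expect; the paper does not spell it out. One small point worth flagging in your write-up: the upper bound $s_0<2=n/2$ is not only where the Sobolev embedding enters, it is also exactly the condition ensuring that multiplication by $\chi\in C_c^\infty$ is bounded on $\dot H^{s_0}(\mathbb{R}^4)$ (for $s_0\geq n/2$ one must work modulo polynomials and the localization step as you wrote it would break down), so the phrase ``lower-order commutator contributions'' is hiding the same constraint and could be made explicit.
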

 \begin{proof}
 By an 1-D Sobolev embedding, the localization technique and the dyadic decomposition, we can get \eqref{Weighted Sobolev Estimates 0}. For the details, see Li and Zhou \cite{Li 1} Theorem 2.10 or the Appendix of Wang and Yu~\cite{Wang}.
 \end{proof}
\begin{Lemma}\label{Morawetz}
(Morawetz Estimate) Let $v$ be the solution to the following problem:
\begin{equation}
\left \{
\begin{array}{ll}
\Box v (t,x)=0,(t,x)\in \mathbb{R}^{+}\times \mathbb{R}^4 ,\\
t=0:v=0,\partial_t v=g(x),x\in \mathbb{R}^4. \\
\end{array} \right.
\end{equation}
 For any given~$T>0$, denoting~$S_{T}=[0,T]\times \mathbb{R}^4$, we have the following weighted space-time estimate:
\begin{equation}
|||x|^{-s}v||_{L^2_{t,x}(S_{T})}\leq C||g||_{\dot{H}^{-(\frac{3}{2}-s)}(\mathbb{R}^4)},
\end{equation}
where $\frac{1}{2}< s< 1$ and $C$ is a positive constant independent of $T$.
\end{Lemma}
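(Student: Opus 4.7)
The plan is to combine an odd-in-$t$ extension of $v$ with Plancherel in time and a Schur test on $S^3$. Since $v(0, x) = 0$, the odd extension $v(-t, x) = -v(t, x)$ still solves $\Box v = 0$ on $\mathbb{R} \times \mathbb{R}^4$, and $\int_0^T |v|^2\, dt \leq \int_{\mathbb{R}} |v|^2\, dt$, so the $T$-independent bound in the statement will follow from the global-in-time inequality $\||x|^{-s} v\|_{L^2(\mathbb{R} \times \mathbb{R}^4)} \leq C \|g\|_{\dot H^{s - 3/2}(\mathbb{R}^4)}$.

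I would then write $v = (\sin(t|D|)/|D|)\, g$ via Fourier inversion and apply Plancherel in $t$. The distributional identity
\[
\mathcal{F}_t\bigl[\sin(t\lambda)/\lambda\bigr](\sigma) = \frac{\pi}{i\lambda}\bigl[\delta(\sigma - \lambda) - \delta(\sigma + \lambda)\bigr]
\]
concentrates the time-frequency support on the cone $\sigma = \pm |\xi|$. After introducing polar coordinates $\xi = \tau\omega$ with $\omega \in S^3$ on the spectral side, the Riesz-potential identity $\widehat{|x|^{-2s}}(\eta) = c_s\, |\eta|^{2s - 4}$ in $\mathbb{R}^4$ should produce, up to an irrelevant constant,
\[
\||x|^{-s} v\|_{L^2_{t,x}(\mathbb{R} \times \mathbb{R}^4)}^2 = c \int_0^\infty \tau^{2s} \iint_{S^3 \times S^3} \widehat g(\tau\omega)\,\overline{\widehat g(\tau\omega')}\, |\omega - \omega'|^{2s - 4}\, d\omega\, d\omega'\, d\tau.
\]
The inner angular double integral is bounded by $C \int_{S^3} |\widehat g(\tau\omega)|^2\, d\omega$ uniformly in $\tau$ via the Schur test: since $S^3$ has local dimension $3$ and $2s - 4 > -3$ exactly when $s > 1/2$, the kernel $|\omega - \omega'|^{2s - 4}$ is uniformly integrable on $S^3$ in either variable. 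Reassembling $\int_0^\infty \tau^{2s} \int_{S^3} |\widehat g|^2\, d\omega\, d\tau = \int_{\mathbb{R}^4} |\xi|^{2(s - 3/2)} |\widehat g|^2\, d\xi = \|g\|_{\dot H^{s - 3/2}}^2$ then closes the argument.

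The main delicate step is justifying the Fourier/spectral manipulations rigorously: the Riesz-potential identity has to be paired with the singular spectral delta in a legitimate way, which I would handle by first establishing the chain of equalities for $\widehat g$ smooth and compactly supported away from the origin, and then extending by density to $g \in \dot H^{s - 3/2}$. The hypothesis $\tfrac{1}{2} < s < 1$ is natural here: the lower bound is precisely what the angular Schur step needs, while the upper bound places $\dot H^{s - 3/2}$ in the negative-regularity regime where the inequality is a genuine dispersive gain (and dovetails with the range $\tfrac{1}{2} < s_0 < 2$ of Lemma 3.1, which would provide an alternative physical-side derivation by duality against the weighted Sobolev embedding). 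I do not expect any obstruction from the finiteness of $T$, since the odd extension exploits $v(0,\cdot) = 0$ to eliminate the $T$-dependence altogether.
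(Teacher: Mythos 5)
The paper does not prove this lemma; it simply cites Hidano--Metcalfe--Smith--Sogge--Zhou, Lemma~3.1. Your argument is a self-contained proof, and as far as I can see it is correct. The key steps all check out: the odd-in-$t$ extension is legitimate because $v(0,\cdot)=0$ and $\Box$ is even in $t$, and it reduces the $T$-uniform bound to the global-in-time estimate; Plancherel in $t$ applied to $\sin(t|\xi|)/|\xi|$ produces the surface measure on the light cone; the Riesz-potential formula $\widehat{|x|^{-2s}}(\eta)=c_s|\eta|^{2s-4}$ is valid on $\mathbb{R}^4$ precisely for $0<s<2$, which contains the stated range; and in polar coordinates the $\delta(\tau-\tau')$ collapses the double radial integral to give exactly the quantity
\[
c\int_0^\infty \tau^{2s}\iint_{S^3\times S^3}\widehat g(\tau\omega)\,\overline{\widehat g(\tau\omega')}\,|\omega-\omega'|^{2s-4}\,d\omega\,d\omega'\,d\tau,
\]
with the $\tau^{2s}$ weight coming from $\tau^{2s-4}\cdot \tau^3\cdot\tau^3\cdot\tau^{-2}\cdot\tau^{-1}$ (the $\tau^{-1}$ from integrating out the delta). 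The Schur test on $S^3$ requires $2s-4>-3$, i.e.\ $s>1/2$, which is exactly the lower endpoint, and the reassembly into $\|g\|_{\dot H^{s-3/2}}^2$ is a direct polar-coordinate computation. Your density argument (first for $\widehat g$ smooth, compactly supported away from the origin, then extend) is the right way to make the distributional pairing rigorous.

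Two small remarks rather than objections. First, your proof actually establishes the estimate on the full range $1/2<s<2$; the upper bound $s<1$ in the lemma as stated is not forced by the Morawetz mechanism itself but is imposed so that the downstream application in Lemma~3.4 can invoke the weighted Sobolev inequality \eqref{Weighted Sobolev Estimates 1} with $s_0=3/2-s\in(1/2,2)$, which needs $s<1$. Your heuristic that $s<1$ is ``what places $\dot H^{s-3/2}$ in the negative-regularity regime'' is not quite the reason: $s-3/2<0$ already for $s<3/2$. Second, since the paper leaves this lemma as an external citation, your Fourier/spectral derivation is a genuinely useful addition: it is more elementary and more explicit than the duality route sketched in your parenthetical, and it makes the role of the threshold $s>1/2$ (angular integrability on $S^3$) completely transparent.
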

\begin{proof}~ See Hidano et al. \cite{Hidano2}
Lemma 3.1.
\end{proof}
\par

\begin{Lemma}\label{Dual estimate}
(Dual Estimate) Let $v$ be the solution to the following problem:
\begin{equation}
\left \{
\begin{array}{ll}
\Box v (t,x)=G(t,x),(t,x)\in \mathbb{R}^{+}\times \mathbb{R}^4 ,\\
t=0:v=0,\partial_t v=0,x\in \mathbb{R}^4. \\
\end{array} \right.
\end{equation}
Then for any given $T>0,$ we have
\begin{equation}\label{yu}
\sup_{0\leq t\leq
T}||v(t)||_{\dot{H}^{(\frac{3}{2}-s)}(\mathbb{R}^4)}\leq C|||x|^s
G||_{L^2(S_{T})},
\end{equation}
where $\frac{1}{2}< s< 1$ and $C$ is a positive constant independent of $T$.
\end{Lemma}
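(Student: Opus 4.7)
The name ``Dual Estimate'' and the appearance of the index $3/2-s$ (paired with the $-(3/2-s)$ that appeared in Lemma \ref{Morawetz}) strongly suggest a $TT^*$/duality argument that deduces this inhomogeneous bound directly from the homogeneous Morawetz estimate. The plan is as follows.

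First, I would fix an arbitrary $t_0\in[0,T]$ and an arbitrary test function $\phi\in C_c^\infty(\mathbb{R}^4)$, and introduce the backward-in-time auxiliary solution $w$ defined by $\Box w=0$ on $[0,t_0]\times\mathbb{R}^4$ with the terminal data $w(t_0,\cdot)=0$ and $\partial_t w(t_0,\cdot)=\phi$. Setting $\widetilde{w}(\tau,x)=w(t_0-\tau,x)$ turns this into the usual forward Cauchy problem $\Box\widetilde{w}=0$, $\widetilde{w}(0,\cdot)=0$, $\partial_\tau\widetilde{w}(0,\cdot)=-\phi$, so Lemma \ref{Morawetz} applies and, after reverting to $w$, yields
\begin{equation*}
\bigl\||x|^{-s}w\bigr\|_{L^2([0,t_0]\times\mathbb{R}^4)}
\leq C\,\|\phi\|_{\dot{H}^{-(3/2-s)}(\mathbb{R}^4)}.
\end{equation*}

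Next, I would pair the equation $\Box v=G$ with $w$ on the slab $[0,t_0]\times\mathbb{R}^4$ and integrate by parts in $\tau$ twice. Since $\Box w=0$, the bulk term on the $w$ side drops out; the initial conditions $v(0)=\partial_t v(0)=0$ kill the boundary contribution at $\tau=0$; and the terminal conditions $w(t_0)=0$, $\partial_t w(t_0)=\phi$ reduce the boundary contribution at $\tau=t_0$ to $-\int_{\mathbb{R}^4}v(t_0,x)\phi(x)\,dx$. Hence
\begin{equation*}
\int_{\mathbb{R}^4} v(t_0,x)\phi(x)\,dx
= -\int_0^{t_0}\!\!\int_{\mathbb{R}^4} G(\tau,x)\,w(\tau,x)\,dx\,d\tau,
\end{equation*}
and inserting the weight $|x|^s\cdot|x|^{-s}$ and applying Cauchy--Schwarz in space-time gives
\begin{equation*}
\left|\int_{\mathbb{R}^4} v(t_0,x)\phi(x)\,dx\right|
\leq \bigl\||x|^s G\bigr\|_{L^2(S_T)}\cdot\bigl\||x|^{-s}w\bigr\|_{L^2([0,t_0]\times\mathbb{R}^4)}
\leq C\,\bigl\||x|^s G\bigr\|_{L^2(S_T)}\|\phi\|_{\dot{H}^{-(3/2-s)}}.
\end{equation*}

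Finally, since $3/2-s\in(1/2,1)$, $\dot{H}^{3/2-s}(\mathbb{R}^4)$ is dual to $\dot{H}^{-(3/2-s)}(\mathbb{R}^4)$ in the usual sense, so taking the supremum of the left-hand side over $\phi$ with $\|\phi\|_{\dot H^{-(3/2-s)}}\leq 1$ yields $\|v(t_0)\|_{\dot H^{3/2-s}}\leq C\||x|^s G\|_{L^2(S_T)}$, after which supping over $t_0\in[0,T]$ gives \eqref{yu}. The only delicate point in the scheme is verifying the duality identification of $\dot H^{3/2-s}$ and $\dot H^{-(3/2-s)}$ on the chosen dense class of test functions and justifying the integration by parts with the regularity available for $v$ and $w$; this is routine given the smoothing furnished by $\Box^{-1}$ and density of $C_c^\infty$, but it is the place where care is needed.
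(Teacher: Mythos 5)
Your argument is correct and is essentially the same duality argument as the paper's: both exploit the homogeneous Morawetz estimate of Lemma~\ref{Morawetz} for a backward auxiliary solution and close with Cauchy--Schwarz and $\dot H^{3/2-s}$--$\dot H^{-(3/2-s)}$ duality. The only (cosmetic) difference is that the paper phrases the duality at the level of $L^\infty_t \dot H^{3/2-s}$ versus $L^1_t \dot H^{-(3/2-s)}$ and therefore lets $w$ solve $\Box w=P$ with zero data at $t=T$, invoking a Duhamel step to control $\||x|^{-s}w\|_{L^2}$; you instead freeze $t_0$, take $P(t,x)=\delta(t-t_0)\phi(x)$ in effect, so that $w$ solves the homogeneous equation and Lemma~\ref{Morawetz} applies directly without Duhamel. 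Both routes are valid and of the same length.
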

\begin{proof}
By Duhamel principle and Lemma \ref{Morawetz}, we have
\begin{equation}\label{Duhamel}
|||x|^{-s}v||_{L^2(S_T)}\leq C\int_0
^{T}||G(t)||_{\dot{H}^{-(\frac{3}{2}-s)}(\mathbb{R}^4)}dt.
\end{equation}
By duality,
\begin{equation}
 \sup_{0\leq t\leq
T}||v(t)||_{\dot{H}^{\frac{3}{2}-s}(\mathbb{R}^4)}
=\sup\{\int_{0}^{T}\int_{\mathbb{R}^4}v(t,x)P(t,x)dxdt:
\int_{0}^{T}||P(t)||_{\dot{H}^{-(\frac{3}{2}-s)}}dt=1\}.
\end{equation}
Let~$w$ satisfy
\begin{equation}
\left \{
\begin{array}{ll}
\Box w (t,x)=P(t,x),(t,x)\in \mathbb{R}^{+}\times \mathbb{R}^4 ,\\
t=T:w=0,\partial_t w=0,x\in \mathbb{R}^4. \\
\end{array} \right.
\end{equation}
 Integrating by parts, we have
\begin{align}
&\int_0^{T} \int_{\mathbb{R}^4}v(t,x)P(t,x)dxdt\nonumber\\
&=\int_0^{T} \int_{\mathbb{R}^4}v(t,x)\Box w(t,x)dxdt\nonumber\\
&=\int_0^{T} \int_{\mathbb{R}^4}\Box v(t,x) w(t,x)dxdt\nonumber\\
&=\int_0^{T} \int_{\mathbb{R}^4}G(t,x) w(t,x)dxdt\nonumber\\
&\leq C|||x|^s G||_{L^2(S_{T})}|||x|^{-s}
w||_{L^2(S_{T})}.
\end{align}
By~\eqref{Duhamel}, we get
\begin{equation}
|||x|^{-s}w||_{L^2(S_T)}\leq C\int_0
^{T}||P(t)||_{\dot{H}^{-(\frac{3}{2}-s)}(\mathbb{R}^4)}dt\leq C.
\end{equation}
So we finally obtain \eqref{yu}.
\end{proof}

\begin{Lemma}\label{L21}
($L^{\infty}_{t}L^2_{x}$ Estimate) Let $v$ satisfy
\begin{equation}\label{L234}
\left \{
\begin{array}{ll}
\Box v (t,x)=G(t,x),(t,x)\in \mathbb{R}^{+}\times \mathbb{R}^4 ,\\
t=0:~v=0,\partial_t v=0,x\in \mathbb{R}^4. \\
\end{array} \right.
\end{equation}
 Then for any given~$T>0,$ we have
\begin{align}\label{L22}
\sup_{0\leq t\leq T}|| v (t)||_{L^2(\mathbb{R}^4)}\leq
C|||x|^{-\frac{1}{2}}
 G||_{L^2([0,T];L^{1,2}(\mathbb{R}^4))},
\end{align}
where $C$ is a positive constant independent of $T$.
\end{Lemma}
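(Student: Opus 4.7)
The plan is a duality (or $TT^*$) argument. Fix $t_0\in[0,T]$. Since $\|v(t_0)\|_{L^2}=\sup\{\int v(t_0)h\,dx:\|h\|_{L^2}=1\}$, I would introduce the backward free wave $w$ solving $\Box w=0$ on $[0,t_0]\times\mathbb{R}^4$ with terminal data $w(t_0)=0$, $\partial_t w(t_0)=h$. Integrating $\int_0^{t_0}\int(v\,\Box w-w\,\Box v)\,dx\,dt$ by parts in space-time, and using $v(0)=\partial_tv(0)=0$ together with $w(t_0)=0$, yields the pairing identity
\begin{equation*}
\int_{\mathbb{R}^4}v(t_0)\,h\,dx=-\int_0^{t_0}\!\!\int_{\mathbb{R}^4}G\,w\,dx\,dt.
\end{equation*}

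Next, I would exploit the duality between the Li--Chen spaces $L^{1,2}$ and $L^{\infty,2}$, splitting the integrand with the weight $|x|^{\pm 1/2}$, and then apply Hölder in time with conjugate exponents $(2,2)$:
\begin{equation*}
\Bigl|\int_0^{t_0}\!\!\int G\,w\,dx\,dt\Bigr|\le\bigl\||x|^{-1/2}G\bigr\|_{L^2_tL^{1,2}_x}\,\bigl\||x|^{1/2}w\bigr\|_{L^2_tL^{\infty,2}_x}.
\end{equation*}
The desired inequality \eqref{L22} therefore reduces to the auxiliary dual estimate
\begin{equation*}
\bigl\||x|^{1/2}w\bigr\|_{L^2_tL^{\infty,2}_x([0,t_0]\times\mathbb{R}^4)}\le C\,\|h\|_{L^2(\mathbb{R}^4)}
\end{equation*}
for the free wave $w$, which is a dual-Morawetz-type bound matching the weighted Sobolev scaling of Lemma 3.1 (with $s_0=3/2$, giving $\||x|^{1/2}w(t)\|_{L^{\infty,2}}\le C\|w(t)\|_{\dot H^{3/2}}$).

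The main obstacle is this last reduction: a naive Plancherel computation gives $\int_0^T\|w(t)\|_{\dot H^{3/2}}^2\,dt\sim T\|h\|_{\dot H^{1/2}}^2$, which is both $T$-dependent and uses the wrong norm of $h$. To close the argument one must exploit the fact that the target norm is $L^{\infty,2}$ (angular $L^2$), not $L^\infty$, so that the Morawetz estimate (Lemma 3.2), applied to an appropriate fractional derivative of $w$ and combined with the sphere-trace identity implicit in the proof of Lemma 3.1, supplies the needed cancellation. Concretely, I would write $|x|^{1/2}w$ radially, use the 1-D-type fundamental-theorem-of-calculus identity $\|f(r,\cdot)\|_{L^2_\omega}^2=-2\int_r^\infty\langle f,\partial_s f\rangle_{L^2_\omega}\,ds$ to dominate $\sup_r r\|w\|_{L^2_\omega}^2$ by a product of weighted $L^2_{t,x}$ norms, and then invoke Lemma 3.2 (with $s$ in the admissible range $(1/2,1)$) on both factors to absorb all remaining weights and obtain the desired bound by $\|h\|_{L^2}$, $T$-independent. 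This is the step where the paper's observation that "the $L^2$ norm in $t$ is on the right-hand side" becomes essential, as it is precisely what the Morawetz space-time norm provides.
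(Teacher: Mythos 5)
Your duality reduction in the first half is correct and is a legitimate rearrangement of the paper's chain of reasoning: the pairing identity $\int v(t_0)h = -\iint Gw$ and the H\"older bound $|\iint Gw|\le \||x|^{-1/2}G\|_{L^2_tL^{1,2}_x}\||x|^{1/2}w\|_{L^2_tL^{\infty,2}_x}$ are both right, and the target estimate \eqref{L22} is indeed equivalent to the dual bound $\||x|^{1/2}w\|_{L^2_tL^{\infty,2}_x}\le C\|h\|_{L^2}$ for the homogeneous wave. The paper instead applies its already-dualized Morawetz estimate (Lemma 3.3) at the level $\dot H^{3/2-s}$ to get $\sup_t\|v(t)\|_{L^2}\le C\||x|^s|D|^{-(3/2-s)}G\|_{L^2(S_T)}$, then writes $|D|^{-(3/2-s)}$ as the convolution kernel $|x-y|^{-(5/2+s)}$, splits $|y|\le|x|/4$ vs.\ $|y|\ge|x|/4$, and finishes with the weighted dual Sobolev estimate \eqref{Weighted Sobolev Estimates 1}. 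Your route is the dual of this, so the hard analytic step you would need is exactly the dual of the paper's Riesz-kernel step, namely $\||x|^{1/2}f\|_{L^{\infty,2}}\le C\||x|^{-s}|D|^{3/2-s}f\|_{L^2}$ for $s\in(1/2,1)$; combined with Morawetz applied to $|D|^{3/2-s}w$ this would close the argument.

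The gap is that your ``concretely'' sketch does not actually prove that inequality, and the particular device you propose fails. Writing $\phi(r)=r^{1/2}\|w(t,r\omega)\|_{L^2_\omega}$, the FTC identity gives
\begin{equation*}
\sup_r r\|w\|^2_{L^2_\omega}\ \le\ \int_0^\infty\|w\|^2_{L^2_\omega}\,ds\;+\;2\int_0^\infty s\,\|w\|_{L^2_\omega}\,\|\partial_s w\|_{L^2_\omega}\,ds.
\end{equation*}
The first term, after $\int_0^T\!dt$, equals $\||x|^{-3/2}w\|^2_{L^2_{t,x}}$, whose weight $|x|^{-3/2}$ is outside the Morawetz range $(1/2,1)$. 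For the second term, Cauchy--Schwarz in $(t,s)$ with weight $s^{a}$ vs.\ $s^{1-a}$ produces the two factors $\||x|^{a-3/2}w\|_{L^2_{t,x}}$ and $\||x|^{-(1+2a)/2}\nabla w\|_{L^2_{t,x}}$; putting both weights in the Morawetz range forces $a\in(1/2,1)$ and simultaneously $a\in(0,1/2)$, which is impossible, and even at the crossover $a=1/2$ Morawetz yields the product $\|h\|_{\dot H^{-1/2}}\|h\|_{\dot H^{1/2}}$, which by log-convexity is $\ge\|h\|^2_{L^2}$, not $\le$. So the FTC-plus-Morawetz scheme, as written, cannot supply the missing trace inequality. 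You would either need a genuinely fractional version of the trace identity (replacing $\partial_r w$ by $|D|^{3/2-s}w$ and paying the price of a nonlocal kernel), or simply prove the weighted Sobolev-to-Sobolev bound directly by the same near/far kernel decomposition the paper uses on the predual side.
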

\begin{proof} Let~$|D|=\sqrt{-\Delta}$, where $\Delta$ is the Laplacian operator on $\mathbb{R}^4$.  Acting ~$|D|^{-(\frac{3}{2}-s)}$ on both sides of \eqref{L234}, and noting Lemma~\ref{Dual estimate}, we get,
\begin{align}\label{Dual4}
\sup_{0\leq t\leq T}||v(t)||_{L^2(\mathbb{R}^4)}\leq C|||x|^s|D|^{-(\frac{3}{2}-s)}G||_{L^2(S_{T})}.
\end{align}
Since we have
\begin{align}
&|x|^s|D|^{-(\frac{3}{2}-s)}G(t,x)\nonumber\\
&=C|x|^s\int_{\mathbb{R}^4}\frac{G(t,y)}{~~|x-y|^{\frac{5}{2}+s}}dy\nonumber\\
&=C|x|^s\int_{|y|\leq \frac{|x|}{4}}\frac{G(t,y)}{~~|x-y|^{\frac{5}{2}+s}}dy+C|x|^s\int_{|y|\geq \frac{|x|}{4}}\frac{G(t,y)}{~~|x-y|^{\frac{5}{2}+s}}dy,
\end{align}
then we get the pointwise estimate
\begin{align}
&|x|^s||D|^{-(\frac{3}{2}-s)}G(t,x)|\nonumber\\
&\leq C\int_{\mathbb{R}^4}\frac{|G(t,y)|}{~~|x-y|^{\frac{5}{2}}}dy+C\int_{\mathbb{R}^4}\frac{|y|^s|G(t,y)|}{~~|x-y|^{\frac{5}{2}+s}}dy.
\end{align}
Consequently,
\begin{align}
&|||x|^s||D|^{-(\frac{3}{2}-s)}G(t)|||_{L^2_{x}(\mathbb{R}^4)}\nonumber\\
&\leq C|||G(t)|||_{\dot{H}^{-\frac{3}{2}}(\mathbb{R}^4)}+C|||x|^{s}|G(t)|||_{\dot{H}^{-(\frac{3}{2}-s)}(\mathbb{R}^4)}.
\end{align}
It follows from~\eqref{Weighted Sobolev Estimates 1} that
\begin{align}
&|||G(t)|||_{\dot{H}^{-\frac{3}{2}}(\mathbb{R}^4)}+|||x|^{s}|G(t)|||_{\dot{H}^{-(\frac{3}{2}-s)}(\mathbb{R}^4)}\nonumber\\
&\leq C|||x|^{-\frac{1}{2}}G(t)||_{L^{1,2}(\mathbb{R}^4)},
\end{align}
so we get
\begin{align}
|||x|^s|D|^{-(\frac{3}{2}-s)}G(t)||_{L_{x}^2(\mathbb{R}^4)}\leq C|||x|^{-\frac{1}{2}}G(t)||_{L^{1,2}(\mathbb{R}^4)}.
\end{align}
Consequently,
\begin{align}\label{RHS22}
|||x|^s|D|^{-(\frac{3}{2}-s)}G||_{L^2(S_{T})}\leq C|||x|^{-\frac{1}{2}}
 G||_{L^2([0,T];L^{1,2}(\mathbb{R}^4))}.
\end{align}
By \eqref{Dual4} and \eqref{RHS22}, we get the conclusion.
\end{proof}
\subsection{Weighted $L^2_{t,x}$ Estimate}
Now we will prove the weighted space-time $L^2_{t,x}$ estimate based on the $L^{\infty}_{t}L^2_{x}$ estimate given in Lemma \ref{L21}.  Our proof is inspired by the original proof of KSS inequality in \cite{KSS}. But in four space dimensions, there is no strong Huygens principle, so we must do some extra pointwise estimates by using the fundamental solution of wave equation. We will follow the argument used in Section 6.6 of Alinhac \cite{Alinhac}.
\begin{Lemma}\label{L22}
 (Weighted $L^2_{t,x}$ Estimate) Let~$v$ satisfy
\begin{equation}
\left \{
\begin{array}{ll}
\Box v (t,x)=G(t,x),(t,x)\in \mathbb{R}^{+}\times \mathbb{R}^4 ,\\
t=0:v=0,\partial_t v=0,x\in \mathbb{R}^4. \\
\end{array} \right.
\end{equation}
Then, for any given~$T>0$, we have
\begin{align}\label{Kss3}
&(\log(2+T))^{-1/2}||<x>^{-1/2}v||_{L^2_{t,x}(S_T)}+||<x>^{-3/4}v||_{L^2_{t,x}(S_T)}\nonumber\\
&\leq C|||x|^{-\frac{1}{2}}
 G||_{L^2([0,T];L^{1,2}(\mathbb{R}^4))},
\end{align}
where $C$ is a positive constant independent of $T$.
\end{Lemma}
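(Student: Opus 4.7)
The strategy is to adapt the Keel-Smith-Sogge multiplier approach to produce a weighted space-time $L^2$ bound on $v$ itself, taking Lemma \ref{L21} as the seed $L^\infty_t L^2_x$ estimate and compensating for the absence of a strong Huygens principle in $\mathbb{R}^{1+4}$ by inserting pointwise control of the forward fundamental solution of $\Box$, in the spirit of Alinhac, Section 6.6.

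First I would apply Duhamel's principle to represent
\[
v(t,x) = \int_0^t \int_{|y-x|\le t-s} K(t-s,\,x-y)\,G(s,y)\,dy\,ds,
\]
where $K$ is (after an appropriate integration by parts) the weakly singular kernel of order $\bigl((t-s)^2-|x-y|^2\bigr)^{-1/2}$ supported in the full interior of the backward cone from $(t,x)$. The fact that this integration domain is the whole interior of the cone, rather than just its boundary as in the odd-dimensional case, is the source of the extra work relative to the standard KSS argument.

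Next I would dyadically decompose $\mathbb{R}^4$ into annuli $A_R=\{|x|\sim R\}$ and estimate $\|\langle x\rangle^{-1/2}v\|_{L^2([0,T]\times A_R)}$ on each shell separately. On a given shell I would substitute the Duhamel representation, apply the Cauchy-Schwarz inequality in the angular variable $\omega\in S^3$ to exploit the $L^2_\omega$ structure of the target norm $L^{1,2}(\mathbb{R}^4)$, and then reduce the resulting estimate to a one-dimensional weighted convolution in $(r,t)$ which can be handled either by a Schur-test or by a Hardy-Littlewood-Sobolev argument. For the inner shells $R\lesssim 1$, the $L^\infty_tL^2_x$ bound from Lemma \ref{L21} contributes directly after a time localization; for the outer shells $R\gg 1$, the pointwise decay of the fundamental solution furnishes the required weight. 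Summing the dyadic contributions yields the $\log(2+T)$ loss at the borderline weight $\langle x\rangle^{-1/2}$ and an absolutely convergent sum at the subcritical weight $\langle x\rangle^{-3/4}$, producing both terms of the claimed inequality in one stroke.

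The main obstacle is taming the singular kernel on the interior of the cone. I expect to split the source region into a piece near the null cone $\{|y-x|\approx t-s\}$, where the kernel is most singular but the angular phase space is thin, and a piece deep in the interior, where the kernel is smoother but the support is larger. Both pieces must ultimately be absorbed by the single weighted norm $\||x|^{-1/2}G\|_{L^2([0,T];L^{1,2}(\mathbb{R}^4))}$ on the right, which forces a careful matching of the radial weight $|x|^{-1/2}$ and the $L^2_\omega$ angular factor through weighted convolution estimates; this weight-bookkeeping is where I expect the real technical difficulty to concentrate, along with the passage from the distributional fundamental solution to an honest pointwise bound suitable for the Schur-test step.
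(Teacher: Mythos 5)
Your high-level strategy matches the paper's: a KSS-style argument seeded by the $L^\infty_tL^2_x$ estimate of Lemma~\ref{L21}, with pointwise bounds on the forward fundamental solution $\chi_+^{-3/2}$ compensating for the failure of strong Huygens in $\mathbb{R}^{1+4}$, followed by a dyadic summation that produces the $\log(2+T)$ loss at the weight $\langle x\rangle^{-1/2}$ and an absolutely convergent sum at $\langle x\rangle^{-3/4}$. The closing dyadic step you describe is exactly the paper's Step~3.

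However, there is a genuine gap in the middle. The heart of the paper's proof is a $T$-\emph{uniform} estimate on a single fixed compact region,
\[
\|v\|_{L^2([0,T];L^2(|x|\le 2))}\le C\,\bigl\||x|^{-1/2}G\bigr\|_{L^2([0,T];L^{1,2}(\mathbb{R}^4))},
\]
which is then transferred to every dyadic shell by scaling. You say that on inner shells ``the $L^\infty_tL^2_x$ bound from Lemma~\ref{L21} contributes directly after a time localization,'' but a naive time localization does not deliver this: applying Lemma~\ref{L21} directly and integrating in $t$ costs a factor $T^{1/2}$, not a constant. The paper avoids that loss by decomposing the source along characteristic shells $R_k=\{k\le |x|+t<k+1\}$, writing $G=\sum_k G_k$ and $v=\sum_k v_k$. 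For an observer at $|x|\le 2$, $t\in[l,l+1]$, the terms with $|k-l|\le 7$ are handled by Lemma~\ref{L21} plus Minkowski's inequality, the terms with $k>l+7$ vanish identically by finite propagation speed, and the terms with $k<l-7$ are the ones genuinely affected by the loss of Huygens and are controlled by the pointwise decay $\chi_+^{-3/2}\lesssim (l-k)^{-2+\delta}|y|^{-1/2}(t-\tau)^{-(1/2+\delta)}$, after which a weighted Cauchy--Schwarz in $k$ makes the sum converge. Your proposal folds the kernel estimate into a Schur test on the dyadic shells in $|x|$ alone, but without the $|x|+t$ decomposition (or an equivalent mechanism) the inner-shell contribution does not close, and you would also have to track the $R$-dependence of the kernel by hand rather than importing it for free via scaling. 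I would also be cautious about replacing $\chi_+^{-3/2}$ with an ``integrated-by-parts'' $\chi_+^{-1/2}$ kernel: the integration by parts puts a derivative on $G$, which is not admissible on the right-hand side of the claimed inequality; the paper sidesteps this by only invoking the pointwise bound on $\chi_+^{-3/2}$ at a fixed positive distance $l-k\gtrsim 1$ from the light cone, where it is an honest function.
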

\begin{proof}
Step 1. (Localization)~First we prove that
\begin{align}\label{Kss41}
||v||_{L^2([0,T];L^2(|x|\leq 2))}\leq C|||x|^{-\frac{1}{2}}
 G||_{L^2([0,T];L^{1,2}(\mathbb{R}^4))}.
\end{align}
Denoting
\begin{align}
R_k=\{(x,t): k\leq |x|+t<k+1\}, ~k=0,1,2,\cdots,
\end{align}
let $\chi_k$ be the characteristic function of~$R_k$, and $G_k=G\chi_k$, we have~$G=\sum_{k}G_k$. Let~$v_k$ satisfy
\begin{equation}
\left \{
\begin{array}{ll}
\Box v_k(t,x)=G_k(t,x),(t,x)\in \mathbb{R}^{+}\times \mathbb{R}^4 ,\\
t=0:v_k=0,\partial_t v_k=0,x\in \mathbb{R}^4. \\
\end{array} \right.
\end{equation}
 Obviously, ~$v=\sum_{k}v_k$.  In order to obtain~\eqref{Kss41}, without loss of generality, we assume that~$T\geq 10$ and~$T$ is an integer.
 We have
\begin{align}\label{Kss6}
&||v||^2_{L^2([8,T];L^2(|x|\leq 2))}\nonumber\\
&=\int_{8}^{T}\int_{|x|\leq 2}|v|^2dxdt\nonumber\\
&=\sum_{l=8}^{T-1}\int_{l}^{l+1}\int_{|x|\leq 2}|v|^2dxdt\nonumber\\
&\leq C\sum_{l=8}^{T-1}\int_{l}^{l+1}\int_{|x|\leq 2}|\sum_{|k-l|\leq 7}v_k|^2dxdt+C\sum_{l=8}^{T-1}\int_{l}^{l+1}\int_{|x|\leq 2}|\sum_{k> l+ 7}v_k|^2dxdt\nonumber\\
&+C\sum_{l=8}^{T-1}\int_{l}^{l+1}\int_{|x|\leq 2}|\sum_{k< l-7}v_k|^2dxdt.
\end{align}
Now we estimate the three terms on the right-hand side of~\eqref{Kss6}, respectively. For the first term, it follows from Lemma~\ref{L21} that
\begin{align}\label{Kss7}
&\sum_{l=8}^{T-1}\int_{l}^{l+1}\int_{|x|\leq 2}|\sum_{|k-l|\leq 7}v_k|^2dxdt\nonumber\\
&\leq C\sum_{l=8}^{T-1}\int_{l}^{l+1}\sum_{|k-l|\leq 7}\int_{|x|\leq 2}|v_k|^2dxdt\nonumber\\
&\leq C\sum_{l=8}^{T-1}\sum_{|k-l|\leq 7}\sup_{0\leq t\leq T}\int_{|x|\leq 2}|v_k|^2dx\nonumber\\
&\leq C\sum_{k=1}^{T+6}|||x|^{-\frac{1}{2}}
 G_k||^2_{L^2([0,T];L^{1,2}(\mathbb{R}^4))}\nonumber\\
&\leq C|||x|^{-\frac{1}{2}}
 G||^2_{L^2([0,T];L^{1,2}(\mathbb{R}^4))}.
\end{align}
 To get the last inequality in \eqref{Kss7}, we have used the Minkowski inequality. For the second term on the right-hand side of \eqref{Kss6}, noting the support of $G_k$, by Huygens principle we know that when $|x|\leq 2, l\leq t\leq l+1, k>l+7$, $v_k(t,x)=0$. Consequently,
\begin{align}\label{ler45}
\sum_{l=8}^{T-1}\int_{l}^{l+1}\int_{|x|\leq 2}|\sum_{k>l+7}v_k|^2dxdt=0.
\end{align}
 Now we deal with the third term on the right-hand side of \eqref{Kss6}.  We have
\begin{align}\label{Kss8}
v_k(t,x)=C\int_{R_k}\chi_{+}^{-\frac{3}{2}}((t-\tau)^2-|x-y|^2)G_k(\tau,y)dyd\tau,
\end{align}
where~$\chi_{+}^{-\frac{3}{2}}$ is the fundamental solution of wave operator in four space dimensions(see Section 6.2 of \cite{Hormander1}). Noting the support of $G_k$, we see that when~$|x|\leq 2, l\leq t\leq l+1, k<l-7, (\tau,y)\in R_k$ , we have
\begin{align}
t-\tau-|x-y|\geq t-\tau-|x|-|y|=t-|x|-(\tau+|y|)\geq C(l-k).
\end{align}
So by the properties of $\chi_{+}^{-\frac{3}{2}}$, we get
\begin{align}
&\chi_{+}^{-\frac{3}{2}}((t-\tau)^2-|x-y|^2)\nonumber\\
&\leq C((t-\tau)^2-|x-y|^2)^{-3/2}\nonumber\\
&\leq C(t-\tau-|x-y|)^{-3/2}(t-\tau+|x-y|)^{-3/2}\nonumber\\
&\leq C(t-|x|-(\tau+|y|))^{-3/2}(t-|x|-(\tau+|y|)+2|y|)^{-1+\delta}(t-\tau)^{-(1/2+\delta)}\nonumber\\
&\leq C(t-|x|-(\tau+|y|))^{-3/2}(t-|x|-(\tau+|y|))^{-1/2+\delta}|y|^{-1/2}(t-\tau)^{-(1/2+\delta)}\nonumber\\
&\leq C(l-k)^{-2+\delta}|y|^{-1/2}(t-\tau)^{-(1/2+\delta)},
\end{align}
where $\delta$ is a fixed real number and $0<\delta<\frac{1}{8}$. Using H\"{o}lder inequality, and noting that $t-\tau\geq C(l-k)\geq 7C,
2(\frac{1}{2}+\delta)>1$, we get
\begin{align}\label{pointse}
&|v_k(t,x)|\nonumber\\
&\leq C(l-k)^{-2+\delta}\int_{R_k}(t-\tau)^{-(1/2+\delta)}|y|^{-1/2}G_k(\tau,y)dyd\tau\nonumber\\
&\leq C(l-k)^{-2+\delta}|||x|^{-\frac{1}{2}}
G_k||_{L^2([0,T];L^{1,2}(\mathbb{R}^4))}.
\end{align}
It follows from Cauchy-Schwarz inequality and \eqref{pointse} that
\begin{align}
&|\sum_{k<l-7}v_k|^2\nonumber\\
&\leq  \sum_{k<l-7} (l-k)^{-2(\frac{1}{2}+\delta)}\sum_{k<l-7} (l-k)^{2(\frac{1}{2}+\delta)}|v_k|^2\nonumber\\
&\leq C\sum_{k<l-7} (l-k)^{1+2\delta}|v_k|^2\nonumber\\
&\leq C\sum_{k<l-7}(l-k)^{-3+4\delta}|||x|^{-\frac{1}{2}}
G_k||^2_{L^2([0,T];L^{1,2}(\mathbb{R}^4))}.
\end{align}
We then have
\begin{align}\label{Kss9}
&\sum_{l=8}^{T-1}\int_{l}^{l+1}\int_{|x|\leq 2}|\sum_{k< l-7}v_k|^2dxdt\nonumber\\
&\leq C\sum_{l=8}^{T-1}\sum_{k<l-7}(l-k)^{-3+4\delta}|||x|^{-\frac{1}{2}}
G_k||^2_{L^2([0,T];L^{1,2}(\mathbb{R}^4))}\nonumber\\
&\leq C\sum_{k=0}^{T}|||x|^{-\frac{1}{2}}
G_k||^2_{L^2([0,T];L^{1,2}(\mathbb{R}^4))}\nonumber\\
&\leq C|||x|^{-\frac{1}{2}}
G||^2_{L^2([0,T];L^{1,2}(\mathbb{R}^4))},
\end{align}
here, Minkowski inequality is used in the last step of \eqref{Kss9}.
By \eqref{Kss6}--\eqref{ler45} and \eqref{Kss9}, we get \eqref{Kss41}.

Step 2. (Scaling) By scaling argument, we will pass from the inequality of step 1, i.e., \eqref{Kss41}, to the following inequality in an annulus:
\begin{align}\label{Kss100}
|||x|^{-\frac{1}{2}}v||_{L^2([0,T];L^2(R\leq |x|\leq 2R))}
\leq C|||x|^{-\frac{1}{2}}G||_{L^2([0,T];L^{1,2}(\mathbb{R}^4))},
\end{align}
where $R>0$, $C$ is a positive constant independent of $R$. Denoting
\begin{align}
v_{R}(x,t)=v(Rx,Rt),~G_{R}(t,x)=R^2G(Rx,Rt),
\end{align}
$v_{R}$ satisfies
\begin{equation}
\left \{
\begin{array}{ll}
\Box v_{R} (t,x)=G_{R}(t,x),(t,x)\in \mathbb{R}^{+}\times \mathbb{R}^4 ,\\
t=0:v_{R}=0,\partial_t v_{R}=0,x\in \mathbb{R}^4. \\
\end{array} \right.
\end{equation}
It follows from \eqref{Kss41} that
\begin{align}\label{Kss4}
||v_{R}||_{L^2([0,\frac{T}{R}];L^2(1\leq |x|\leq 2))}\leq C|||x|^{-\frac{1}{2}}
G_{R}||_{L^2([0,\frac{T}{R}];L^{1,2}(\mathbb{R}^4))}.
\end{align}
By simple calculation, we have
\begin{align}\label{Kss4}
||v_{R}||_{L^2([0,\frac{T}{R}];L^2(1\leq |x|\leq 2))}=R^{-5/2}||v||_{L^2([0,{T}];L^2(R\leq |x|\leq 2R))},
\end{align}
\begin{align}
|||x|^{-\frac{1}{2}}
G_{R}||_{L^2([0,\frac{T}{R}];L^{1,2}(\mathbb{R}^4))}=R^{-2}|||x|^{-\frac{1}{2}}
G||_{L^2([0,{T}];L^{1,2}(\mathbb{R}^4))},
\end{align}
so we get
\begin{align}\label{Kss10}
|||x|^{-1/2}v||_{L^2([0,T];L^2(R\leq |x|\leq 2R))}
\leq C|||x|^{-\frac{1}{2}}
G||_{L^2([0,{T}];L^{1,2}(\mathbb{R}^4))}.
\end{align}
\par
Step 3. (Dyadic decomposition) To get~\eqref{Kss3}
, we will prove
\begin{align}\label{Kss12}
(\log(2+T))^{-1/2}||<x>^{-1/2}v||_{L^2_{t,x}(S_T)}
\leq  C|||x|^{-\frac{1}{2}}G||_{L^2([0,T];L^{1,2}(\mathbb{R}^4))},
\end{align}
and
\begin{align}\label{Kss131}
||<x>^{-3/4}v||_{L^2_{t,x}(S_T)}
\leq  C|||x|^{-\frac{1}{2}}G||_{L^2([0,T];L^{1,2}(\mathbb{R}^4))},
\end{align}
respectively.
To get \eqref{Kss12}, noting that by Lemma \ref{L21} we have
\begin{align}
&||<x>^{-1/2}v||_{L^2([0,T]; L^2(|x|\geq T))}\\
&\leq C(\log(2+T))^{1/2}\sup_{0\leq t\leq T }||v(t)||_{L^2(\mathbb{R}^4)}\\
&\leq C(\log(2+T))^{1/2}|||x|^{-\frac{1}{2}}G||_{L^2([0,T];L^{1,2}(\mathbb{R}^4))},
\end{align}
 we need only to prove
\begin{align}\label{Kss14}
(\log(2+T))^{-1/2}||<x>^{-1/2}v||_{L^2([0,T]; L^2(|x|\leq T))}
\leq C|||x|^{-\frac{1}{2}}G||_{L^2([0,T];L^{1,2}(\mathbb{R}^4))}.
\end{align}
Taking~$N$ such that~$2^{N}<T\leq 2^{N+1}$, by \eqref{Kss41} and \eqref{Kss100}, we get
\begin{align}
&\int_{0}^{T}\int_{|x|\leq T}(1+|x|)^{-1}|v|^2dxdt\nonumber\\
&=\int_{0}^{T}\int_{|x|\leq 1}(1+|x|)^{-1}|v|^2dxdt+\sum_{k=0}^{N}\int_{0}^{T}\int_{2^{k}\leq |x|\leq 2^{k+1}}(1+|x|)^{-1}|v|^2dxdt\nonumber\\
&\leq C (N+2)|||x|^{-\frac{1}{2}}G||^2_{L^2([0,T];L^{1,2}(\mathbb{R}^4))}\nonumber\\
&\leq C\log(2+T)|||x|^{-\frac{1}{2}}G||^2_{L^2([0,T];L^{1,2}(\mathbb{R}^4))}.
\end{align}
This completes the proof of \eqref{Kss12}. Now we prove \eqref{Kss131}. Taking $R=2^{k}(k=0,1,2,\cdots)$ in \eqref{Kss100},  we have
\begin{align}\label{Kss18}
\int_{0}^{T}\int_{2^{k}\leq |x|\leq 2^{k+1}}(1+|x|)^{-1}|v|^2dxdt
\leq C|||x|^{-\frac{1}{2}}G||^2_{L^2([0,T];L^{1,2}(\mathbb{R}^4))}.
\end{align}
 Multiplying $2^{-\frac{k}{2}}$ on both sides of \eqref{Kss18}, and then summing up with respect to $k$, we get
\begin{align}\label{Kss19}
||<x>^{-3/4}v||_{L^2([0,T];L^2(|x|\geq 1))}
\leq  C||x|^{-\frac{1}{2}}G||_{L^2([0,T];L^{1,2}(\mathbb{R}^4))}.
\end{align}
Combining \eqref{Kss19} with \eqref{Kss41}, we can get \eqref{Kss131}.
\end{proof}
\subsection{Higher Order $L^{\infty}_{t}L^{2}_{x}$ and Weighted $L^2_{t,x}$ Estimates}
By Lemma \ref{L21} and Lemma \ref{L22}, we have the following
\begin{Lemma}\label{L23}
 Let $v$ satisfy
\begin{equation}
\left \{
\begin{array}{ll}
\Box v (t,x)=G(t,x),(t,x)\in \mathbb{R}^{+}\times \mathbb{R}^4 ,\\
t=0:v=0,\partial_t v=0,x\in \mathbb{R}^4. \\
\end{array} \right.
\end{equation}
Then for  any given $T>0$ we have
\begin{align}\label{Kss334}
&\sup_{0\leq t\leq T}||v(t)||_{L^2(\mathbb{R}^4)}+(\log(2+T))^{-1/2}||<x>^{-1/2}v||_{L^2_{t,x}(S_T)}\nonumber\\
&+||<x>^{-3/4}v||_{L^2_{t,x}(S_T)}\nonumber\\
&\leq C|||x|^{-\frac{1}{2}}
 G||_{L^2([0,T];L^{1,2}(\mathbb{R}^4))},
\end{align}
where $C$ is a positive constant independent of $T$.
\end{Lemma}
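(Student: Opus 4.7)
The estimate claimed in Lemma~\ref{L23} is simply the conjunction of the two inequalities proved in the preceding two subsections, so my plan is to assemble it with no new analytic input. First I would apply Lemma~\ref{L21} to obtain
\[
\sup_{0\leq t\leq T}\|v(t)\|_{L^{2}(\mathbb{R}^4)}\leq C\,\||x|^{-\frac{1}{2}}G\|_{L^{2}([0,T];L^{1,2}(\mathbb{R}^4))}.
\]
Next I would apply Lemma~\ref{L22} to obtain
\[
(\log(2+T))^{-1/2}\|<x>^{-1/2}v\|_{L^{2}_{t,x}(S_T)}+\|<x>^{-3/4}v\|_{L^{2}_{t,x}(S_T)}\leq C\,\||x|^{-\frac{1}{2}}G\|_{L^{2}([0,T];L^{1,2}(\mathbb{R}^4))}.
\]
The two right-hand sides agree up to a constant, and both constants are $T$-independent by the statements of the input lemmas, so summing the inequalities yields \eqref{Kss334} with a new constant equal to the sum of the two.

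There is no substantive obstacle in this step: the entire analytic difficulty has already been absorbed, on one side, into the Morawetz/duality argument behind Lemma~\ref{L21}, and, on the other, into the localization/scaling/dyadic-decomposition argument behind Lemma~\ref{L22}. The only thing to verify here is the innocuous bookkeeping that the two input constants are genuinely $T$-independent, which is explicit in those statements. The reason for isolating the combined form as a separate lemma is purely one of downstream convenience: the three left-hand-side norms are precisely the quantities that in Section~4 will be cut off to pass to the exterior domain, and the same triple is what will have to be controlled after commuting the equation with the admissible fields~$Z^{\varsigma}$ when the higher-order versions announced in the subsection title are subsequently established.
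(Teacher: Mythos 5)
Your proposal is correct and matches the paper exactly: the paper introduces Lemma~\ref{L23} with the single line ``By Lemma~\ref{L21} and Lemma~\ref{L22}, we have the following,'' offering no separate proof, so the combination you describe is precisely the intended argument. Your observation about the $T$-independence of the constants and the downstream role of the combined estimate is a faithful gloss on the paper's structure.
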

Since the wave operator $\square$ commutes with $Z$,  it is not hard to get the higher order versions of \eqref{Kss334} as follows.
\begin{Lemma}\label{L24}
 Let~$v$ satisfy
\begin{equation}
\left \{
\begin{array}{ll}
\Box v (t,x)=G(t,x),(t,x)\in \mathbb{R}^{+}\times \mathbb{R}^4 ,\\
t=0:v=0,\partial_t v=0,x\in \mathbb{R}^4. \\
\end{array} \right.
\end{equation}
Then for any given $T>0$ and $N=0,1,2,\cdots$, we have
\begin{align}\label{Kss1323}
&\sum_{|\mu|\leq N}\sup_{0\leq t\leq T}||Z^{\mu}v(t)||_{L^2(\mathbb{R}^4)}+\sum_{|\mu|\leq N}(\log(2+T))^{-1/2}||<x>^{-1/2}Z^{\mu}v||_{L^2_{t,x}(S_T)}\nonumber\\
&+\sum_{|\mu|\leq N}||<x>^{-3/4}Z^{\mu}v||_{L^2_{t,x}(S_T)}\nonumber\\
&\leq C\sum_{|\mu|\leq N}|||x|^{-\frac{1}{2}}
 Z^{\mu}G||_{L^2([0,T];L^{1,2}(\mathbb{R}^4))}
\end{align}
and
\begin{align}\label{Kss1329}
&\sum_{|\mu|\leq N}\sup_{0\leq t\leq T}||\partial_{t,x}^{\mu}v(t)||_{L^2(\mathbb{R}^4)}+\sum_{|\mu|\leq N}(\log(2+T))^{-1/2}||<x>^{-1/2}\partial_{t,x}^{\mu}v||_{L^2_{t,x}(S_T)}\nonumber\\
&+\sum_{|\mu|\leq N}||<x>^{-3/4}\partial_{t,x}^{\mu}v||_{L^2_{t,x}(S_T)}\nonumber\\
&\leq C\sum_{|\mu|\leq N}|||x|^{-\frac{1}{2}}
 \partial_{t,x}^{\mu}G||_{L^2([0,T];L^{1,2}(\mathbb{R}^4))},
\end{align}
where $C$ is a positive constant independent of $T$.
\end{Lemma}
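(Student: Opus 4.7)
The key observation, already highlighted by the authors in the sentence preceding the lemma, is that $\square$ commutes with each vector field $Z_i \in \{\partial_0,\ldots,\partial_4,\Omega_{ij}\}$. Thus for any multi-index $\mu$,
\[
\square(Z^\mu v) = Z^\mu G,
\]
and the natural strategy is to apply Lemma \ref{L23} to $Z^\mu v$ for every $|\mu|\leq N$ and sum the resulting inequalities.

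The one genuine subtlety is the vanishing Cauchy data required by Lemma \ref{L23}. For multi-indices $\mu$ built only from the spatial generators $\partial_1,\ldots,\partial_4$ and $\Omega_{ij}$, the zero data of $v$ is preserved, so Lemma \ref{L23} applies verbatim to $Z^\mu v$ and yields
\[
\sup_{0\leq t\leq T}\|Z^\mu v(t)\|_{L^2} + (\log(2+T))^{-1/2}\|\langle x\rangle^{-1/2}Z^\mu v\|_{L^2_{t,x}(S_T)} + \|\langle x\rangle^{-3/4}Z^\mu v\|_{L^2_{t,x}(S_T)} \leq C\||x|^{-1/2}Z^\mu G\|_{L^2([0,T];L^{1,2})}.
\]
When $\partial_t$ appears in $\mu$, this breaks down, since for example $\partial_t^2 v|_{t=0} = G(0,\cdot)$ need not vanish. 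To circumvent this I would induct on the number $\mu_0$ of $\partial_t$-factors, using the wave equation $\partial_t^2 v = \Delta v + G$ to trade each pair of time derivatives for a spatial Laplacian plus a source term. After the induction, $Z^\mu v$ is rewritten as a sum of purely spatial differential operators applied to $v$ (to which Lemma \ref{L23} applies with zero Cauchy data), plus terms that are themselves $Z$-derivatives of $G$, which are already of the form appearing on the right-hand side of \eqref{Kss1323}.

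Summing these estimates over $|\mu|\leq N$ produces \eqref{Kss1323}. The companion estimate \eqref{Kss1329} follows by exactly the same argument applied to the sub-family of vector fields $\partial$, the rotations $\Omega_{ij}$ playing no role. The only step requiring real care is the bookkeeping in the induction on $\mu_0$, because each use of $\partial_t^2 v = \Delta v + G$ introduces a new source term whose $Z$-derivatives must be reabsorbed into the right-hand side; this is the reason the authors call the extension ``not hard'' rather than immediate. Beyond Lemma \ref{L23} and the commutator identities $[\square,Z_i]=0$, no new analytic input is needed.
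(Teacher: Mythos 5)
Your starting point is correct and coincides with the paper's stated reason: $\square$ commutes with every $Z_i$, so $\square(Z^\mu v)=Z^\mu G$, and Lemma~\ref{L23} applies immediately when $Z^\mu$ involves only spatial generators (rotations and $\partial_1,\dots,\partial_4$), since then $Z^\mu v$ retains vanishing Cauchy data. You are also right that once $\partial_t$ enters $\mu$ the Cauchy data of $Z^\mu v$ is generally nonzero; already $\partial_t(\partial_t v)|_{t=0}=G(0,\cdot)$. The paper says nothing about this point, but your attempted repair does not close the gap.

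After substituting $\partial_t^2 v=\Delta v+G$, the $\Delta v$ piece is indeed handled by Lemma~\ref{L23}, but the residual $G$ piece must then be estimated directly: you would need, for instance, $\sup_{0\le t\le T}\|Z^{\nu}G(t)\|_{L^2(\mathbb{R}^4)}$ and $\|\langle x\rangle^{-3/4}Z^{\nu}G\|_{L^2_{t,x}(S_T)}$ to be dominated by $\sum_{|\sigma|\le N}\||x|^{-1/2}Z^{\sigma}G\|_{L^2([0,T];L^{1,2}(\mathbb{R}^4))}$ with a $T$-independent constant. This fails: an $L^\infty_t$ norm is not controlled by an $L^2_t$ norm on $[0,T]$ uniformly in $T$, and the spatial norms involved ($L^2_x$, or $\langle x\rangle^{-3/4}L^2_x$) are not comparable to $|x|^{-1/2}L^{1,2}_x$. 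So the claim that the extra $G$-terms ``are already of the form appearing on the right-hand side of \eqref{Kss1323}'' is incorrect, and the induction on the number $\mu_0$ of time derivatives does not terminate in an admissible bound. There is a second, smaller defect: trading $\partial_t^2$ for $\Delta$ only pairs off time derivatives, so for $\mu_0=1$ (or any odd $\mu_0$) a single $\partial_t$ survives, and $Z^\mu v$ is never reduced to ``purely spatial differential operators applied to $v$'' as you assert. A correct treatment would have to split $Z^\mu v$ into its Duhamel part (to which Lemma~\ref{L23} applies) plus the free evolution of the nonzero Cauchy data determined by $G$ and its time derivatives at $t=0$, and then estimate that free part separately, e.g.\ via Lemma~\ref{Morawetz} together with a fundamental-theorem-of-calculus argument in $t$ to pass from pointwise-in-time data norms to $L^2_t$ norms, at the cost of extra $Z$-derivatives of $G$. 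The subtlety you noticed is real, but the substitution argument you propose leaves the essential obstacle unresolved.
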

\section{Some Estimates Outside of a Star-Shaped Obstacle}
\subsection{Higher Order $L^{\infty}_{t}L^{2}_{x}$ and Weighted $L^2_{t,x}$ Estimates}
 In this section, we will give some estimates outside of a star-shaped obstacle which is needed in the proof of lifespan estimate. By Lemma \ref{L24} and a cutoff argument, $L^{\infty}_tL^2_{x}$ and weighted $L^2_{t,x}$ estimates for the unknown function itself can be established.
\begin{Lemma}\label{KS5}
Let $v\in C^{\infty}(\mathbb{R}^{+}\times \mathbb{R}^4\backslash \mathcal {K})$ satisfy
\begin{align}\label{exterior}
\begin{cases}
\square v(t,x)=G(t,x),~(t,x)\in \mathbb{R}^{+}\times\mathbb{R}^4\backslash \mathcal {K},\\
v(t,x)=0,~x\in \partial  \mathcal {K},\\
t=0:~v=0,~\partial_{t}v=0,
\end{cases}
\end{align}
where the obstacle $\mathcal {K}$ is bounded, smooth and strictly star-shaped with
respect to the origin. Assume that \eqref{Obstacle} holds. For  any given $T>0$, denoting $S_T=[0,T]\times \mathbb{R}^4\backslash \mathcal {K}$ and $N=1,2\cdots,$ we have
\begin{align}\label{Kss14}
&\sum_{|\mu|\leq N}\sup_{0\leq t\leq T}||Z^{\mu}v(t)||_{L^2(\mathbb{R}^4\backslash \mathcal {K})}+\sum_{|\mu|\leq N}(\log(2+T))^{-1/2}||<x>^{-1/2}Z^{\mu}v||_{L^2_{t,x}(S_T)}\nonumber\\
&+\sum_{|\mu|\leq N}||<x>^{-3/4}Z^{\mu}v||_{L^2_{t,x}(S_T)}\nonumber\\
&\leq C\sum_{|\alpha|\leq N-1}\sup_{0\leq t\leq T}||\partial_{t,x}^{\mu}v'(t)||_{L^2(|x|\leq 1)}+C\sum_{|\mu|\leq N}||\partial_{t,x}^{\mu}v'||_{L^2_{t,x}([0,T]\times \{|x|\leq 1\})}\nonumber\\
&+C\sum_{|\mu|\leq N}||<x>^{-\frac{1}{2}}
 Z^{\mu}G||_{L^2([0,T];L^{1,2}(|x|>\frac{3}{4}))}
\end{align}
and
\begin{align}\label{Kss1123}
&\sum_{|\mu|\leq N}\sup_{0\leq t\leq T}||\partial_{t,x}^{\mu}v(t)||_{L^2(\mathbb{R}^4\backslash \mathcal {K})}+\sum_{|\mu|\leq N}(\log(2+T))^{-1/2}||<x>^{-1/2}\partial_{t,x}^{\mu}v||_{L^2_{t,x}(S_T)}\nonumber\\
&+\sum_{|\mu|\leq N}||<x>^{-3/4}\partial_{t,x}^{\mu}v||_{L^2_{t,x}(S_T)}\nonumber\\
&\leq C\sum_{|\mu|\leq N-1}\sup_{0\leq t\leq T}||\partial_{t,x}^{\mu}v'(t)||_{L^2(|x|\leq 1)}+C\sum_{|\mu|\leq N}||\partial_{t,x}^{\mu}v'||_{L^2_{t,x}([0,T]\times \{|x|\leq 1\})}\nonumber\\
&+C\sum_{|\mu|\leq N}||<x>^{-\frac{1}{2}}
 \partial_{t,x}^{\mu}G||_{L^2([0,T];L^{1,2}(|x|>\frac{3}{4}))},
\end{align}
where $C$ is a positive constant independent of $T$.
\end{Lemma}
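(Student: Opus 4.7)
The strategy is the standard cutoff reduction to the Minkowski-space estimates of Lemma \ref{L24}. Fix a smooth radial function $\eta(x)$ with $\eta \equiv 0$ for $|x|\leq 3/4$ and $\eta \equiv 1$ for $|x|\geq 1$; this is possible because $\mathcal{K} \subset \mathbb{B}_{1/2}$. Set $w := \eta v$, extended by zero inside $\mathcal{K}$, so that $w \in C^\infty([0,T]\times\mathbb{R}^4)$ with vanishing Cauchy data and
\begin{equation*}
\Box w = \eta G + [\Box,\eta]v, \qquad [\Box,\eta] = -2\nabla\eta\cdot\nabla - \Delta\eta,
\end{equation*}
the commutator coefficients being smooth and supported in the compact annulus $A = \{3/4 \leq |x| \leq 1\}$.

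I would then apply Lemma \ref{L24} to $w$ with source $\eta G + [\Box,\eta]v$. For the $\eta G$ piece, since $\eta$ is radial and time-independent we have $\Omega_{ij}\eta=0$ and $\partial_t\eta=0$, so by the Leibniz rule every term of $Z^\mu(\eta G)$ is either $\eta\,Z^\mu G$ (supported in $|x|\geq 3/4$) or a smooth bounded factor (a spatial derivative of $\eta$, supported in $A$) times a lower-order $Z^\nu G$; because the weight $|x|^{-1/2}$ is bounded on $\{|x|\geq 3/4\}$, the full contribution is controlled by $C\sum_{|\mu|\leq N}|||x|^{-1/2}Z^\mu G||_{L^2([0,T];L^{1,2}(|x|>3/4))}$, which is the last term on the right-hand side of the claimed inequality. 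For the commutator source $[\Box,\eta]v$, its coefficients are smooth and compactly supported in $A$, on which $|Z^\alpha|$ is controlled by $\sum_{|\beta|\leq|\alpha|}|\partial^\beta|$ (the rotations $\Omega$ have bounded coefficients on bounded sets), so expanding $Z^\mu([\Box,\eta]v)$ by Leibniz yields terms of the form (smooth bounded function supported in $A$) times $\partial^\gamma v$ with $|\gamma|\leq N+1$; this is bounded by $C\sum_{|\mu|\leq N}||\partial_{t,x}^\mu v'||_{L^2_{t,x}([0,T]\times\{|x|\leq 1\})}$, with $v'=\partial v$ absorbing the single extra derivative.

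Finally, I would reconstruct the bounds for $v$ by writing $v=(1-\eta)v+w$. On $\{|x|\geq 1\}$ we have $v=w$, so Lemma \ref{L24} applied to $w$ directly controls the left-hand side of the claimed inequality over that region. On $\{|x|\leq 1\}$ the same $|Z^\mu|\leq C\sum_{|\beta|\leq|\mu|}|\partial^\beta|$ comparison applies, and a Poincar\'e inequality using $v|_{\partial\mathcal{K}}=0$ converts the zero-derivative contribution into $||v'||_{L^2}$; the $\sup_t$ and $L^2_{t,x}$ local norms of $v$ on $\{|x|\leq 1\}$ thus get absorbed into the first and second local terms on the right, respectively. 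The second inequality (with $\partial_{t,x}^\mu$ replacing $Z^\mu$) follows from the identical argument using the corresponding bound in Lemma \ref{L24}. The principal bookkeeping obstacle is derivative-order balance: $[\Box,\eta]v$ carries one spatial derivative of $v$, so $Z^\mu([\Box,\eta]v)$ involves up to $N+1$ derivatives of $v$, which is precisely why the local $L^2_{t,x}$ term on the right-hand side features $v'=\partial v$ at order $|\mu|\leq N$ rather than $v$ at order $|\mu|\leq N$.
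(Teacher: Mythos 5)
Your proposal follows essentially the same route as the paper: fix the same smooth cutoff $\rho$ (equal to $1$ for $|x|\geq 1$ and vanishing for $|x|\leq 3/4$), set $\phi=\rho v$, apply Lemma \ref{L24} to $\phi$ with source $\rho G-2\nabla\rho\cdot\nabla v-\Delta\rho\,v$, absorb the commutator source (supported in the annulus) into the local $\partial^\mu v'$ term via Poincar\'e, and handle $\{|x|\leq 1\}$ directly with Poincar\'e using the Dirichlet condition. Your additional observations---choosing $\rho$ radial so that $\Omega_{ij}\rho=0$, and the explicit derivative-count bookkeeping for the commutator---are just small conveniences that the paper leaves implicit; the decomposition, the key input (Lemma \ref{L24}), and the role of Poincar\'e all match.
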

\begin{proof}
 Noting that $v$ satisfies the homogeneous Dirichlet boundary condition, by Poincar\'{e} inequality we have
\begin{align}
\sup_{0\leq t\leq T}||v(t)||_{L^2(|x|\leq 1)}\leq C\sup_{0\leq t\leq T}||v'(t)||_{L^2(|x|\leq 1)}.
\end{align}
Then we get
\begin{align}\label{Poin1}
&\sum_{|\mu|\leq N}\sup_{0\leq t\leq T}||Z^{\mu}v(t)||_{L^2(|x|\leq 1)}\nonumber\\
&\leq C\sum_{|\mu|\leq N}\sup_{0\leq t\leq T}||\partial_{t,x}^{\mu}v(t)||_{L^2(|x|\leq 1)}\nonumber\\
&\leq C\sum_{|\mu|\leq N-1}\sup_{0\leq t\leq T}||\partial_{t,x}^{\mu}v'(t)||_{L^2(|x|\leq 1)}+C\sup_{0\leq t\leq T}||v(t)||_{L^2(|x|\leq 1)}\nonumber\\
&\leq C\sum_{|\mu|\leq N-1}\sup_{0\leq t\leq T}||\partial_{t,x}^{\mu}v'(t)||_{L^2(|x|\leq 1)}+C\sup_{0\leq t\leq T}||v'(t)||_{L^2(|x|\leq 1)}\nonumber\\
&\leq C\sum_{|\mu|\leq N-1}\sup_{0\leq t\leq T}||\partial_{t,x}^{\mu}v'(t)||_{L^2(|x|\leq 1)}.\nonumber\\
\end{align}
Similarly, we have
\begin{align}\label{Poin2}
\sum_{|\mu|\leq N}||Z^{\mu}v||_{L^2_{t,x}([0,T]\times \{|x|\leq 1 \})}\leq C\sum_{|\mu|\leq N-1}||\partial_{t,x}^{\mu}v'||_{L^2_{t,x}([0,T]\times \{|x|\leq 1 \})}.
\end{align}
So
\begin{align}\label{Kss198}
&\sum_{|\mu|\leq N}\sup_{0\leq t\leq T}||Z^{\mu}v(t)||_{L^2(|x|\leq 1)}+\sum_{|\mu|\leq N}(\log(2+T))^{-1/2}||<x>^{-1/2}Z^{\mu}v||_{L^2([0,T]\times\{|x|\leq 1\})}\nonumber\\
&+\sum_{|\mu|\leq N}||<x>^{-3/4}Z^{\mu}v||_{L^2([0,T]\times\{|x|\leq 1\})}\nonumber\\
&\leq C\sum_{|\mu|\leq N}\sup_{0\leq t\leq T}||Z^{\mu}v(t)||_{L^2(|x|\leq 1)}+C\sum_{|\mu|\leq N}||Z^{\mu}v||_{L^2_{t,x}([0,T]\times \{|x|\leq 1 \})}\nonumber\\
&\leq C\sum_{|\mu|\leq N-1}\sup_{0\leq t\leq T}||\partial_{t,x}^{\mu}v'(t)||_{L^2(|x|\leq 1)}+C\sum_{|\mu|\leq N-1}||\partial_{t,x}^{\mu}v'||_{L^2_{t,x}([0,T]\times \{|x|\leq 1 \})}.
\end{align}
Taking a smooth cutoff function $\rho$ such that
\begin{align}
\rho(x)=
\begin{cases}
1,~|x|\geq 1,\\
0,~|x|\leq \frac{3}{4},
\end{cases}
\end{align}
and denoting $\phi=\rho v$, $\phi$ satisfies the following wave equation in the whole space:
\begin{align}
\square \phi=\rho G-2\nabla \rho\cdot \nabla v-\Delta \rho v:=\widetilde{G}.
\end{align}
It follows from \eqref{Kss1323} and Poincar\'{e} inequality that
\begin{align}\label{Kss15}
&\sum_{|\mu|\leq N}\sup_{0\leq t\leq T}||Z^{\mu}v(t)||_{L^2(|x|\geq 1)}+\sum_{|\mu|\leq N}(\log(2+T))^{-1/2}||<x>^{-1/2}Z^{\mu}v||_{L^2_{t,x}([0,T]\times {|x|\geq 1})}\nonumber\\
&+\sum_{|\mu|\leq N}||<x>^{-3/4}Z^{\mu}v||_{L^2_{t,x}([0,T]\times {|x|\geq 1})}\nonumber\\
&\leq \sum_{|\mu|\leq N}\sup_{0\leq t\leq T}||Z^{\mu}\phi(t)||_{L^2(\mathbb{R}^4)}+\sum_{|\mu|\leq N}(\log(2+T))^{-1/2}||<x>^{-1/2}Z^{\mu}\phi||_{L^2_{t,x}(S_T)}\nonumber\\
&+\sum_{|\mu|\leq N}||<x>^{-3/4}Z^{\mu}\phi||_{L^2_{t,x}(S_T)}\nonumber\\
&\leq C\sum_{|\mu|\leq N}|||x|^{-\frac{1}{2}}
 Z^{\mu}\widetilde{G}||_{L^2([0,T];L^{1,2}(\mathbb{R}^4))}\nonumber\\
 &\leq C\sum_{|\mu|\leq N}||\partial_{t,x}^{\mu}v'||_{L^2_{t,x}([0,T]\times \{|x|\leq 1\})}+
C\sum_{|\mu|\leq N}||<x>^{-\frac{1}{2}}
 Z^{\mu}G||_{L^2([0,T];L^{1,2}(|x|>\frac{3}{4}))}.
 \end{align}
 By \eqref{Kss198} and \eqref{Kss15}, we get \eqref{Kss14}. Similarly, we can get \eqref{Kss1123}.
\end{proof}
We point out that two localized linear terms
\begin{align}\sum_{|\mu|\leq N-1}\sup_{0\leq t\leq T}||\partial_{t,x}^{\mu}v'(t)||_{L^2(|x|\leq 1)}~\text{and}~\sum_{|\mu|\leq N}||\partial_{t,x}^{\mu}v'||_{L^2_{t,x}([0,T]\times \{|x|\leq 1\})}
 \end{align}
 appear on the right-hand side of ~\eqref{Kss14} and \eqref{Kss1123}. These terms can be estimated by energy estimate and KSS inequalities, which have been obtained by Metalfe and Sogge in \cite{Metcalfe1}. Now we list these estimates without proofs in the next section(for details, see Lemma 3.2, Lemma 3.3, Lemma 5.2 and Lemma 5.3 in \cite{Metcalfe1}).
\subsection{Energy Estimate and KSS Estimate}
\begin{Lemma}\label{KSS123}
Let $w$ satisfy
\begin{equation}\label{Perturbation}
\left \{
\begin{array}{llll}
\Box_h w(t,x)=Q(t,x), ~ (x,t)\in \mathbb{R}^{+}\times \mathbb{R}^4\backslash  \mathcal {K}  , \\
w|_{\partial \mathcal {K}}=0,\\
\end{array} \right.
\end{equation}
where
\begin{align}
\Box_h w=(\partial_t^2 -\Delta)w
+\sum_{\alpha,\beta=0}^{4}h^{\alpha\beta}(t,x)\partial_{\alpha}\partial_{\beta}w.
\end{align}
 Assume that, without loss of generality, $h^{\alpha\beta}$ satisfy the symmetry conditions
\begin{equation}
h^{\alpha\beta}=h^{\beta\alpha},
\end{equation}
and the smallness condition
\begin{equation}
|h|\ll 1.
\end{equation}
Here we denote
\begin{equation}
|h|=\sum_{\alpha,\beta=0}^{4}|h^{\alpha\beta}|,
\end{equation}
\begin{equation}
|\partial
h|=\sum_{\alpha,\beta,\gamma=0}^{4}|\partial_{\gamma}h^{\alpha\beta}|.
\end{equation}
Then we have
\begin{align}\label{KSS21}
&\sup_{0\leq t\leq T}\sum_{\substack{|\mu|\leq N\\|a|=0,1}}||\partial^{\mu}\partial^{a}w'(t)||^2_{L^2(\mathbb{R}^4\backslash \mathcal {K})}+
\sum_{\substack{|\mu|\leq N\\|a|=0,1}}||<x>^{-\frac{3}{4}}\partial^{\mu}\partial^{a}w'||^2_{L^2(S_T)}\nonumber\\
&\leq C\sum_{\substack{|\mu|\leq N\\|a|=0,1}}||\partial^{\mu}\partial^{a}w'(0)||^2_{L^2(\mathbb{R}^4\backslash \mathcal {K})}
+C\sum_{\substack{|{\mu}|,|\nu|\leq N\\|a|,|b|=0,1}}\int_0^{T}\int_{\mathbb{R}^4\backslash \mathcal {K}}(|\partial^{\mu}\partial^{a}w'|+\frac{|\partial^{\mu}\partial^{a}w|}{r})|\partial^{\nu}\partial^{b}Q|dxdt\nonumber\\
&+C\sum_{\substack{|{\mu}|,|\nu|\leq N\\|a|,|b|=0,1}}\int_0^{T}\int_{\mathbb{R}^4\backslash \mathcal {K}}(|\partial h|+\frac{|h|}{r})|\partial^{\mu}\partial^{a}w'|(|\partial^{\nu}\partial^{b}w'|+\frac{|\partial^{\nu}\partial^{b}w|}{r})dxdt\nonumber\\
&+C\sum_{\substack{|{\mu}|,|\nu|\\|a|,|b|=0,1}}\sum_{\alpha,\beta=0}^{4} \int_0^{T}\int_{\mathbb{R}^4\backslash \mathcal {K}}(|\partial^{\mu}\partial^{a}w'|+\frac{|\partial^{\mu}\partial^{a}w|}{r})|[h^{\alpha\beta}\partial_{\alpha\beta}, \partial^{\nu}\partial^{b}]w|dxdt\nonumber\\
&+C\sum_{\substack{|\mu|\leq N-1\\|a|=0,1}}||\partial^{\mu}\partial^{a}\square w||^2_{L^2_{t,x}(S_{T})}+C\sum_{\substack{|\mu|\leq N-1\\|a|=0,1}}||\partial^{\mu}\partial^{a}\square w||^2_{L^{\infty}([0,T];L^{2}(\mathbb{R}^4\backslash \mathcal {K}))}
\end{align}
and
\begin{align}\label{KSS22}
&\sup_{0\leq t\leq T}\sum_{\substack{|\mu|\leq N\\|a|=0,1}}||Z^{\mu}\partial^{a}w'(t)||^2_{L^2(\mathbb{R}^4\backslash \mathcal {K})}+ \sum_{\substack{|\mu|\leq N\\|a|=0,1}}||<x>^{-\frac{3}{4}}Z^{\mu}\partial^{a}w'||^2_{L^2(S_T)}\nonumber\\
&\leq C\sum_{\substack{|\mu|\leq N\\|a|=0,1}}||Z^{\mu}\partial^{a}w'(0)||^2_{L^2(\mathbb{R}^4\backslash \mathcal {K})}
+C\sum_{\substack{|\mu|,|\nu|\leq N\\|a|,|b|=0,1}}\int_0^{T}\int_{\mathbb{R}^4\backslash \mathcal {K}}(|Z^{\mu}\partial^{a}w'|+\frac{|Z^{\mu}\partial^{a}w|}{r})|Z^{\nu}\partial^{b}Q|dxdt\nonumber\\
&+C\sum_{\substack{|\mu|,|\nu|\leq N\\|a|,|b|=0,1}}\int_0^{T}\int_{\mathbb{R}^4\backslash \mathcal {K}}(|\partial h|+\frac{|h|}{r})|Z^{\mu}\partial^{a}w'|(|Z^{\nu}\partial^{b}w'|+\frac{|Z^{\nu}\partial^{b}w|}{r})dxdt\nonumber\\
&+C\sum_{\substack{|\mu|,|\nu|\leq N\\|a|,|b|=0,1}}\sum_{\alpha,\beta=0}^{4} \int_0^{T}\int_{\mathbb{R}^4\backslash \mathcal {K}}(|Z^{\mu}\partial^{a}w'|+\frac{|Z^{\mu}\partial^{a}w|}{r})|[h^{\alpha\beta}\partial_{\alpha\beta}, Z^{\nu}\partial^{b}]w|dxdt\nonumber\\
&+C\sum_{\substack{|\mu|\leq N+1\\|a|=0,1}}||\partial^{\mu}_{x}\partial^{a}w'||^2_{L_{t,x}^{2}([0,T]\times \{|x|\leq 1\})}+
C\sum_{\substack{|\mu|\leq N+1\\|a|=0,1}}||\partial^{\mu}_{x}\partial^{a}w'||^2_{L^{\infty}([0,T]; L^{2}(\{|x|\leq 1\}))},
\end{align}
where $[~,~]$ stands for the Poisson's bracket, and $C$ is a positive constant independent of $T$.
\end{Lemma}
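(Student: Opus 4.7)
The plan is to prove the two estimates in parallel, starting with the simpler $\partial$-only version \eqref{KSS21} and then extending to the rotation vector field version \eqref{KSS22} via a cutoff/elliptic-regularity argument. The guiding principle is to commute the perturbed equation with the appropriate family of derivatives and then combine the standard energy multiplier with a Morawetz-type radial (KSS) multiplier, exploiting the star-shapedness of $\mathcal{K}$ to control the resulting boundary contribution on $\partial\mathcal{K}$.

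For \eqref{KSS21}, I would commute $\Box_h w = Q$ with $\partial^\mu\partial^a$ for $|\mu|\le N$, $|a|\le 1$, giving
\begin{equation*}
\Box_h(\partial^\mu\partial^a w) = \partial^\mu\partial^a Q - \sum_{\alpha,\beta=0}^{4}[h^{\alpha\beta}\partial_\alpha\partial_\beta,\,\partial^\mu\partial^a] w.
\end{equation*}
Since $\partial^\mu\partial^a$ preserves the homogeneous Dirichlet boundary condition on $\partial\mathcal{K}$, I can first apply the energy multiplier $\partial_t(\partial^\mu\partial^a w)$ and integrate by parts on $[0,T]\times(\mathbb{R}^4\setminus\mathcal{K})$; under the smallness of $h$ this controls $\sup_t\|\partial^\mu\partial^a w'(t)\|_{L^2}^2$ modulo terms involving $(|\partial h|+|h|/r)|\partial^\mu\partial^a w'|^2$, the forcing $\partial^\mu\partial^a Q$, and the commutator, exactly as they appear on the right-hand side of \eqref{KSS21}. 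For the $\langle x\rangle^{-3/4}$-weighted space-time term I would then apply a KSS-type radial multiplier $f(r)\partial_r + \tfrac{n-1}{2r}f(r)$ with $f(r)=r/\langle r\rangle^{3/2}$; integrating against the commuted equation produces precisely the $\langle x\rangle^{-3/4}$ bulk quantity. The star-shaped assumption \eqref{Obstacle} guarantees that both multipliers give a boundary integral on $\partial\mathcal{K}$ with favorable sign, which may simply be discarded. Summing over $|\mu|\le N$, $|a|\le 1$, and absorbing the lower-order commutator pieces via induction on $N$ (using the terms $\|\partial^\mu\partial^a\Box w\|$ with $|\mu|\le N-1$ to close the induction), yields \eqref{KSS21}.

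For \eqref{KSS22} the main obstacle is that the rotations $\Omega_{ij}$ inside $Z$ do not preserve the Dirichlet boundary condition on a general star-shaped $\partial\mathcal{K}$, so one cannot directly repeat the previous argument. My approach is to introduce a smooth cutoff $\chi(x)$ equal to one for $|x|\ge 1$ and vanishing for $|x|\le 1/2$, and write $w=\chi w+(1-\chi)w$. The piece $\chi w$ is supported away from $\partial\mathcal{K}$, so $Z^\mu\partial^a(\chi w)$ is well defined globally and rotations cause no boundary trouble; commuting $Z^\mu\partial^a$ with $\Box_h$ and repeating the energy and KSS multiplier estimates produces the bulk terms of \eqref{KSS22} together with new terms coming from $[\Box_h,\chi]w$, which are supported in $\{1/2\le|x|\le 1\}$ and involve only $\partial_{t,x}$-derivatives of $w$ — precisely the two localized terms $\|\partial_x^\mu\partial^a w'\|_{L^2_{t,x}([0,T]\times\{|x|\le 1\})}$ and $\|\partial_x^\mu\partial^a w'\|_{L^\infty_t L^2(\{|x|\le 1\})}$ on the right-hand side of \eqref{KSS22}. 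The remaining piece $(1-\chi)w$ lives in the bounded region $\{|x|\le 1\}\cap(\mathbb{R}^4\setminus\mathcal{K})$; there I use elliptic regularity (equivalently, the fact that $Z$ is comparable to $\partial_{t,x}$ on a compact set) to dominate $Z^\mu\partial^a$-derivatives by $\partial_x^\mu\partial^a$-derivatives, again feeding the localized right-hand side.

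The main technical obstacle I expect is the careful bookkeeping of the commutators $[h^{\alpha\beta}\partial_\alpha\partial_\beta,\,Z^\nu\partial^b]$ together with the cutoff commutators, and verifying that no top-order derivative escapes control: the induction on $N$ must be arranged so that every $N$-th order contribution on the right-hand side is either listed as a commutator/perturbation/source term, controlled by the energy and KSS norms themselves with a small constant, or localized to $\{|x|\le 1\}$ where it can be absorbed using elliptic regularity. All of this is carried out in detail in \cite{Metcalfe1}, and the statement above is effectively a combined version of Lemmas 3.2, 3.3, 5.2 and 5.3 there, which is why the authors choose to quote it rather than reprove it.
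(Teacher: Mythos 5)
Your overall structure (energy multiplier plus KSS radial multiplier, star-shapedness for the boundary sign, cutoff plus elliptic regularity near $\partial\mathcal{K}$ for the $Z$ version) matches the Metcalfe--Sogge argument, and you correctly identify that the paper does not prove this lemma but simply cites Lemmas 3.2, 3.3, 5.2 and 5.3 of \cite{Metcalfe1}. However, there is a genuine error in your sketch of \eqref{KSS21}: you assert that ``$\partial^\mu\partial^a$ preserves the homogeneous Dirichlet boundary condition on $\partial\mathcal{K}$,'' and on that basis apply the energy and radial multipliers directly to $\partial^\mu\partial^a w$. This is false whenever $\mu$ contains a spatial component: if $w|_{\partial\mathcal{K}}=0$ for all $t$, then only pure time derivatives $\partial_t^k w$ vanish on $\partial\mathcal{K}$; spatial derivatives $\partial_{x_i}w$ do not. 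If one integrates by parts with a spatial derivative acting first, the boundary integrals over $\partial\mathcal{K}$ are uncontrolled, and the argument breaks.

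The correct route in \cite{Metcalfe1} is to commute $\Box_h$ only with pure time derivatives $\partial_t^j$ (which do preserve the Dirichlet condition), run the energy and KSS multiplier estimates for those, and then recover the remaining mixed and spatial derivatives by elliptic regularity: writing $\Delta w = \partial_t^2 w - \Box w$, the Dirichlet elliptic estimate on $\mathbb{R}^4\setminus\mathcal{K}$ bounds two spatial derivatives of $w$ by $\partial_t^2 w$, lower-order terms, and $\Box w$. Iterating this produces exactly the two terms $\sum_{|\mu|\le N-1,|a|\le 1}\|\partial^\mu\partial^a\Box w\|^2_{L^2_{t,x}(S_T)}$ and $\sum_{|\mu|\le N-1,|a|\le 1}\|\partial^\mu\partial^a\Box w\|^2_{L^\infty_t L^2_x}$ on the right-hand side of \eqref{KSS21}. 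These are therefore not, as you suggest, bookkeeping artifacts of ``closing an induction on commutators''; they are the structural cost of trading uncontrolled spatial derivatives at the boundary for the free d'Alembertian. You do invoke elliptic regularity in your discussion of \eqref{KSS22}, but the same mechanism is already essential for \eqref{KSS21}, and your sketch as written would not produce those terms, nor would the integrations by parts it relies on be justified.
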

\begin{remark}
The energy estimate and KSS estimates \cite{Metcalfe1} involve only higher order estimates of the first order derivatives of the unknown function.
Higher order estimates of the second order derivatives of the unknown function can be obtained by the same method.
\end{remark}
\subsection{Decay Estimate}
\begin{Lemma}\label{spatial decay}
Assume that $f\in C^{\infty}(\mathbb{R}^4\backslash \mathcal {K})$ and $f$ vanishes for large $x$. Then for all $x\in \mathbb{R}^4\backslash \mathcal {K}$ we have
\begin{align}\label{01}
<r>^{\frac{3}{2}}|f(x)|\leq C
\sum_{|a|\leq 3}||Z^{a}f||_{L^2(\mathbb{R}^4\backslash\mathcal {K})},
\end{align}
where $r=|x|, <r>=(1+r^2)^{\frac{1}{2}}$, and $C$ is a positive constant independent of $f$ and $x$.
\end{Lemma}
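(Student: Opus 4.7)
}
This is a Klainerman--Sobolev type pointwise estimate tailored to the vector fields $Z=(\partial,\Omega)$, which do not include Lorentz boosts; consistent with that, we should only expect $\langle r\rangle^{(n-1)/2}=\langle r\rangle^{3/2}$ decay rather than the full $\langle r\rangle^{n/2}$. The plan is to split the region $\mathbb{R}^4\setminus\mathcal{K}$ into a bounded piece $\{|x|\le 2\}$ and an exterior piece $\{|x|\ge 1\}$, and to treat each by a different Sobolev argument. The bounded piece uses the standard embedding; the exterior piece combines Sobolev on the sphere $S^3$ (costing two angular derivatives $\Omega$) with a one-dimensional radial trace inequality (costing one $\partial$).

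For $|x|\le 2$, the factor $\langle r\rangle^{3/2}$ is harmless, and since $\mathbb{R}^4\setminus\mathcal{K}$ is a smooth domain, the standard Sobolev embedding $H^3\hookrightarrow L^\infty$ (valid because $3>4/2$) gives $|f(x)|\le C\sum_{|a|\le 3}\|\partial^a f\|_{L^2(\mathbb{R}^4\setminus\mathcal{K})}\le C\sum_{|a|\le 3}\|Z^a f\|_{L^2(\mathbb{R}^4\setminus\mathcal{K})}$.

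For $|x|\ge 1$, I would pass to polar coordinates $x=r\omega$, $\omega\in S^3$, and use that the $\Omega_{ij}$ restrict to a basis of tangential vector fields on each sphere $\{|x|=r\}$. The Sobolev embedding $H^2(S^3)\hookrightarrow L^\infty(S^3)$ (valid because $2>3/2$) yields, for each $r\ge 1$,
\begin{equation*}
\sup_{\omega\in S^3}|f(r\omega)|^2\;\le\;C\sum_{|\alpha|\le 2}\int_{S^3}|\Omega^\alpha f(r\omega)|^2\,d\omega.
\end{equation*}
Since $f$ vanishes for large $|x|$, a radial integration by parts gives, for any smooth $g$ that vanishes at infinity and every $r\ge 1$,
\begin{equation*}
r^3\!\int_{S^3}|g(r\omega)|^2 d\omega=-\!\int_r^\infty\!\!\int_{S^3}\!\bigl(3\rho^2|g|^2+2\rho^3 g\,\partial_\rho g\bigr)d\omega\,d\rho\le C\!\int_r^\infty\!\!\int_{S^3}\!\rho^3\bigl(|g|^2+|\partial_\rho g|^2\bigr)d\omega\,d\rho,
\end{equation*}
where I used $\rho\ge 1$ and AM--GM on the cross term. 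Applied to $g=\Omega^\alpha f$, and observing that $\partial_\rho=\tfrac{x}{|x|}\cdot\nabla$ is controlled by $|\nabla|$, the right-hand side is bounded by $C\sum_{|a|\le 1}\|\partial^a\Omega^\alpha f\|_{L^2(\mathbb{R}^4\setminus\mathcal{K})}^2$ (the obstacle $\mathcal{K}\subset\{|x|<1/2\}$ does not interfere, since we integrate over $\rho\ge r\ge 1$). Combining the sphere embedding with this radial bound yields
\begin{equation*}
r^3|f(r\omega)|^2\;\le\;C\sum_{\substack{|\alpha|\le 2,\;|a|\le 1}}\|\partial^a\Omega^\alpha f\|_{L^2(\mathbb{R}^4\setminus\mathcal{K})}^2\;\le\;C\sum_{|\mu|\le 3}\|Z^\mu f\|_{L^2(\mathbb{R}^4\setminus\mathcal{K})}^2,
\end{equation*}
which is exactly \eqref{01} in the outer region. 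Together with the bounded region, \eqref{01} follows.

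The estimate is essentially routine; the only mild subtlety is bookkeeping the distribution of derivatives, namely that two of the three $Z$'s must be angular (to support the $S^3$-Sobolev step) while one remains available as a radial derivative (for the 1D trace step). The presence of the obstacle is not really an obstruction because $\mathcal{K}\subset\mathbb{B}_{1/2}$, so the outer argument sees only $\mathbb{R}^4$, and the inner argument is a standard Sobolev embedding on a smooth domain.
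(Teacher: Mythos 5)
Your argument is correct and follows essentially the same route as the paper: a standard $H^3\hookrightarrow L^\infty$ embedding on the bounded region, and for $|x|\ge 1$ the spherical Sobolev embedding $H^2(S^3)\hookrightarrow L^\infty(S^3)$ combined with a radial integration from infinity (using that $f$ vanishes for large $|x|$ and that the Jacobian weight $\rho^3$ dominates $r^3$), costing two angular plus one radial derivative. The only difference is cosmetic — you integrate by parts with the weight $\rho^3$ included and use AM--GM, whereas the paper bounds $|g(r\omega)|^2$ by $2\int_r^\infty|g||\partial_\rho g|\,d\rho$ and applies Cauchy--Schwarz — and your derivative bookkeeping ($|\alpha|\le 2$ angular, one radial) is in fact the cleaner version of the paper's display.
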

\begin{proof}
When $0<r\leq 1,$ $<r>\sim 1$, by usual Sobolev embedding
 \begin{align}
 H^3(\mathbb{B}_1)\hookrightarrow L^{\infty}(\mathbb{B}_1),
 \end{align}
 we can get \eqref{01}. When $r\geq 1$ , $<r>\sim |r|$,
 by the Sobolev embedding on $S^3$:
\begin{align}
H^{2}(S^3)\hookrightarrow L^{\infty}(S^3),
\end{align}
we get that
\begin{align}\label{123}
&r^3|f(x)|^2\nonumber\\
&\leq Cr^3\sum_{|\alpha|\leq 3}\int_{S^3}|\Omega^{\alpha}f(rw)|^2dw\nonumber\\
&\leq Cr^3\sum_{|\alpha|\leq 3}\int_{S^3}\int_{r}^{\infty}|\partial_{\rho}\Omega^{\alpha}f(\rho w)||\Omega^{\alpha}f(\rho w)|dwd\rho\nonumber\\
&\leq C\sum_{|\alpha|\leq 3}||\partial_r\Omega^{\alpha}f\Omega^{\alpha}f||_{L^1(\mathbb{R}^4\backslash\mathcal {K})}\nonumber\\
&\leq C\sum_{|a|\leq 3}||Z^{a}f||^2_{L^2(\mathbb{R}^4\backslash\mathcal {K})}.
\end{align}
Thus when $r\geq 1$, \eqref{01} still holds.
\end{proof}
\section{Lifespan Estimate of Classical Solutions to Problem \eqref{Quasilinear}}
In this section, we will prove Theorem \ref{Main12} by a bootstrap argument.\par
Let $u$ satisfy \eqref{Quasilinear}. For any $T>0$, denote
\begin{align}
M(T)&=\sup_{0\leq t\leq T}\sum_{|\mu|\leq 50}||\partial^{\mu}u(t)||_{L^2(\mathbb{R}^4\backslash \mathcal {K})}+(\log(2+T))^{-1/2}\sum_{|\mu|\leq 50}||<x>^{-1/2}\partial^{\mu}u||_{L^2_{t,x}(S_T)}\nonumber\\
&+\sum_{|\mu|\leq 50}||<x>^{-3/4}\partial^{\mu}u||_{L^2_{t,x}(S_T)}+\sup_{0\leq t\leq T}\sum_{|\mu|\leq 50}||\partial^{\mu}\partial u(t)||_{L^2(\mathbb{R}^4\backslash \mathcal {K})}\nonumber\\
&+\sup_{0\leq t\leq T}\sum_{|\mu|\leq 50}||\partial^{\mu}\partial^2 u(t)||_{L^2(\mathbb{R}^4\backslash \mathcal {K})}\nonumber\\
&+\sum_{|\mu|\leq 50}||<x>^{-3/4}\partial^{\mu}\partial u||_{L^2_{t,x}(S_T)}+\sum_{|\mu|\leq 50}||<x>^{-3/4}\partial^{\mu}\partial^2 u||_{L^2_{t,x}(S_T)}\nonumber\\
&+\sup_{0\leq t\leq T}\sum_{|\mu|\leq 49}||Z^{\mu}u(t)||_{L^2(\mathbb{R}^4\backslash \mathcal {K})}+(\log(2+T))^{-1/2}\sum_{|\mu|\leq 49}||<x>^{-1/2}Z^{\mu}u||_{L^2_{t,x}(S_T)}\nonumber\\
&+\sum_{|\mu|\leq 49}||<x>^{-3/4}Z^{\mu}u||_{L^2_{t,x}(S_T)}+\sup_{0\leq t\leq T}\sum_{|\mu|\leq 49}||Z^{\mu}\partial u(t)||_{L^2(\mathbb{R}^4\backslash \mathcal {K})}\nonumber\\
&+\sup_{0\leq t\leq T}\sum_{|\mu|\leq 49}||Z^{\mu}\partial^2 u(t)||_{L^2(\mathbb{R}^4\backslash \mathcal {K})}\nonumber\\
&+\sum_{|\mu|\leq 49}||<x>^{-3/4}Z^{\mu}\partial u||_{L^2_{t,x}(S_T)}+\sum_{|\mu|\leq 49}||<x>^{-3/4}Z^{\mu}\partial^2 u||_{L^2_{t,x}(S_T)}.
\end{align}
Assume
\begin{align}
M(0)\leq C_0\varepsilon.
\end{align}
We will prove that  if $\varepsilon>0$ is small enough, then for all $T\leq \exp{(\frac{c}{\varepsilon^2})}$ we have
\begin{align}
M(T)\leq 2A\varepsilon.
\end{align}
Here $A$ and $c$ are positive constants independent of $\varepsilon$, to be determined later.
\par
Assume that
\begin{align}
M(T)\leq 4A\varepsilon,
\end{align}
it follows from Lemma \ref{KS5} and Lemma \ref{KSS123} that
\begin{align}\label{main estimate}
M^2(T)
&\leq C_1^2\varepsilon^2+ C\sum_{|\mu|\leq 49}||<x>^{-\frac{1}{2}}
 Z^{\mu}F||^2_{L^2([0,T];L^{1,2}(|x|>\frac{3}{4}))}\nonumber\\
 &+C\sum_{\substack{|\mu|,|\nu|\leq 49\\|a|,|b|=0,1}}\int_0^{T}\int_{\mathbb{R}^4\backslash \mathcal {K}}(|Z^{\mu}\partial^{a}u'|+\frac{|Z^{\mu}\partial^{a}u|}{r})|Z^{\nu}\partial^{b}H|dxdt\nonumber\\
&+C\sum_{\substack{|\mu|,|\nu|\leq 49\\|a|,|b|=0,1}}\int_0^{T}\int_{\mathbb{R}^4\backslash \mathcal {K}}(|\partial \gamma|+\frac{|\gamma|}{r})|Z^{\mu}\partial^{a}u'|(|Z^{\nu}\partial^{b}u'|+\frac{|Z^{\nu}\partial^{b}u|}{r})dxdt\nonumber\\
&+C\sum_{\substack{|\mu|,|\nu|\leq 49\\|a|,|b|=0,1}}\sum_{\alpha,\beta=0}^{4} \int_0^{T}\int_{\mathbb{R}^4\backslash \mathcal {K}}(|Z^{\mu}\partial^{a}u'|+\frac{|Z^{\mu}\partial^{a}u|}{r})|[\gamma^{\alpha\beta}\partial_{\alpha\beta}, Z^{\nu}\partial^{b}]u|dxdt\nonumber\\
&+C\sum_{|\mu|\leq 50}||<x>^{-\frac{1}{2}}
 \partial^{\mu}F||^2_{L^2([0,T];L^{1,2}(|x|>\frac{3}{4}))}\nonumber\\
 &+C\sum_{\substack{|\mu|,|\nu|\leq 50\\|a|,|b|=0,1}}\int_0^{T}\int_{\mathbb{R}^4\backslash \mathcal {K}}(|\partial^{\mu}\partial^{a}u'|+\frac{|\partial^{\mu}\partial^{a}u|}{r})|\partial^{\nu}\partial^{b}H|dxdt\nonumber\\
&+C\sum_{\substack{|\mu|,|\nu|\leq 50\\|a|,|b|=0,1}}\int_0^{T}\int_{\mathbb{R}^4\backslash \mathcal {K}}(|\partial \gamma|+\frac{|\gamma|}{r})|\partial^{\mu}\partial^{a}u'|(|\partial^{\nu}\partial^{b}u'|+\frac{|\partial^{\nu}\partial^{b}u|}{r})dxdt\nonumber\\
&+C\sum_{\substack{|\mu|,|\nu|\leq 50\\|a|,|b|=0,1}}\sum_{\alpha,\beta=0}^{4} \int_0^{T}\int_{\mathbb{R}^4\backslash \mathcal {K}}(|\partial^{\mu}\partial^{a}u'|+\frac{|\partial^{\mu}\partial^{a}u|}{r})|[\gamma^{\alpha\beta}\partial_{\alpha\beta}, \partial^{\nu}\partial^{b}]u|dxdt\nonumber\\
&+C\sum_{\substack{|\mu|\leq 49\\|a|=0,1}}||\partial^{\mu}\partial^{a}F||^2_{L^{\infty}([0,T]; L^2(\mathbb{R}^4\backslash \mathcal {K}))}+C\sum_{\substack{|\mu|\leq 49\\|a|=0,1}}||\partial^{\mu}\partial^{a}F||^2_{L^2_{t,x}(S_{T})}\nonumber\\
&:=C_1^2\varepsilon^2+I+II+\cdots X.
\end{align}
Now we estimate all terms on the right-hand side of \eqref{main estimate}, respectively. For $I$, by H\"{o}lder inequality we have
\begin{align}
I &\leq C\sum_{|\mu|\leq 27}||<x>^{-\frac{1}{2}}Z^{\mu}u||^2_{L^2([0,T];L^{2,\infty}(|x|\geq \frac{3}{4}))}\sum_{|\mu|\leq 50}\sum_{|b|\leq 2}||Z^{\mu}\partial^{b}u||^2_{L^{\infty}([0,T];L^{2}(|x|\geq \frac{3}{4})}\nonumber\\
&\leq CA^2\varepsilon^2 \sum_{|\mu|\leq 27}||<x>^{-\frac{1}{2}}Z^{\mu}u||^2_{L^2([0,T];L^{2,\infty}(|x|\geq \frac{3}{4}))}.
\end{align}
It follows from the Sobolev embedding on $S^3$:
\begin{align}
H^{2}(S^3)\hookrightarrow L^{\infty}(S^3)
\end{align}
that
\begin{align}
&\sum_{|\mu|\leq 27}||<x>^{-\frac{1}{2}}Z^{\mu}u||^2_{L^2([0,T];L^{2,\infty}(|x|\geq \frac{3}{4}))}\nonumber\\
&\leq \sum_{|\mu|\leq 29}||<x>^{-\frac{1}{2}}Z^{\mu}u||^2_{L^2([0,T];L^{2}(|x|\geq \frac{3}{4}))}\nonumber\\
&\leq C(\log(2+T))A^2 \varepsilon^2.
\end{align}
Thus, we get
\begin{align}
I\leq C(\log(2+T))A^4 \varepsilon^4.
\end{align}
Similarly, we have
\begin{align}
V\leq C(\log(2+T))A^4 \varepsilon^4.
\end{align}
For $II$, noting that $0\in \mathcal {K}$ and then $1/r$ is bounded on $\mathbb{R}^4\backslash \mathcal {K}$, by H\"{o}lder inequality and Lemma \ref{spatial decay} we have
\begin{align}
II\leq & C\sum_{|\mu|\leq 49,|b|\leq 2}||<x>^{-\frac{3}{4}}Z^{\mu}\partial^{b}u||^2_{L^2_{t,x}(S_T)}\sum_{|\mu|\leq 27} ||<x>^{\frac{3}{2}}Z^{\mu}u||_{L^{\infty}([0,T];L^{\infty}(\mathbb{R}^4\backslash \mathcal {K}))}\nonumber\\
&\leq  C\sum_{|\mu|\leq 49,|b|\leq 2}||<x>^{-\frac{3}{4}}Z^{\mu}\partial^{b}u||^2_{L^2_{t,x}(S_T)}\sum_{|\mu|\leq 30}||Z^{\mu}u||_{L^{\infty}([0,T];L^{2}(\mathbb{R}^4\backslash \mathcal {K}))}\nonumber\\
&\leq CA^3\varepsilon^3.
\end{align}
Similarly,
\begin{align}
III, VI, VII\leq CA^3\varepsilon^3.
\end{align}
Noting the fact that $[\partial, Z]$ belongs to the span of $\{\partial\}$, for all $|\nu|\leq 49, |b|\leq 1, 0\leq \alpha,\beta\leq 4$, we have
\begin{align}
|[\gamma^{\alpha\beta}\partial_{\alpha\beta}, Z^{\nu}\partial^{b}]u|\leq \sum_{|\nu|\leq 49,|b|\leq 2}|Z^{\nu}\partial^bu|\sum_{|\nu|\leq 27}|Z^{\nu}u|.
\end{align}
By the same method used to deal with $II$, we can obtain
\begin{align}
IV\leq CA^3\varepsilon^3.
\end{align}
Similarly,
\begin{align}
VIII\leq CA^3\varepsilon^3.
\end{align}
For the last two terms,
it follows from H\"{o}lder inequality and Lemma \ref{spatial decay} that
\begin{align}
IX&\leq C\sum_{|\mu|\leq 50,|b|\leq 2}||\partial^{\mu}\partial^b u||^2_{L^{\infty}([0,T]; L^{2}(\mathbb{R}^4\backslash \mathcal {K}))}\sum_{|\mu|\leq 27}||\partial^{\mu}u||^2_{L^{\infty}([0,T]; L^{\infty}(\mathbb{R}^4\backslash \mathcal {K}))}\nonumber\\
&\leq CA^2\varepsilon^2 \sum_{|\mu|\leq 30}||Z^{\mu}u||^2_{L^{\infty}([0,T]; L^{2}(\mathbb{R}^4\backslash \mathcal {K}))}\nonumber\\
&\leq CA^4\varepsilon^4
\end{align}
and
\begin{align}
X&\leq  C\sum_{|\mu|\leq 50,|b|\leq 2}||<r>^{-\frac{3}{4}}\partial^{\mu}\partial^b u||^2_{L^{2}([0,T]; L^{2}(\mathbb{R}^4\backslash \mathcal {K}))}\sum_{|\mu|\leq 27}||<r>^{\frac{3}{4}}\partial^{\mu}u||^2_{L^{\infty}([0,T]; L^{\infty}(\mathbb{R}^4\backslash \mathcal {K}))}\nonumber\\
&\leq  C\sum_{|\mu|\leq 50,|b|\leq 2}||<r>^{-\frac{3}{4}}\partial^{\mu}\partial^b u||^2_{L^{2}([0,T]; L^{2}(\mathbb{R}^4\backslash \mathcal {K}))}\sum_{|\mu|\leq 30}||Z^{\mu}u||^2_{L^{\infty}([0,T]; L^{2}(\mathbb{R}^4\backslash \mathcal {K}))}\nonumber\\
&\leq CA^4\varepsilon^4.
\end{align}
 Combining all the previous estimates, we get
\begin{align}
M(T)\leq C_{1}\varepsilon+ C(\log(2+T))^{1/2}A^2\varepsilon^2+CA^{3/2}\varepsilon^{3/2}+CA^2\varepsilon^2.
\end{align}
Taking $A=2\max\{C_0,C_1\}$, we see that if
\begin{align}
C(\log(2+T))^{1/2}A\varepsilon,~ CA^{1/2}\varepsilon^{1/2},~ CA\varepsilon\leq \frac{1}{2},
\end{align}
then
\begin{align}
 M(T)\leq 2A\varepsilon.
\end{align}
From the above argument, we know that if the parameter $\varepsilon>0$ is small enough, then for all $T\leq \exp(\frac{c}{\varepsilon^2})$, we have
\begin{align}
M(T)\leq 2A\varepsilon.
\end{align}
Consequently, we get the lifespan estimate of classical solutions to problem \eqref{Quasilinear}:
\begin{align}
T_{\varepsilon}\geq \exp(\frac{c}{\varepsilon^2}),
\end{align}
where $c$ is a positive constant independent of $\varepsilon$.
\section*{Acknowledgements}
The authors would like to express their sincere gratitude to Professor Ta-Tsien Li for his helpful suggestions and encouragement.

\end{document}